\documentclass[11pt]{amsart}

\usepackage{graphics,graphicx}
\usepackage{tikz-cd}
\usetikzlibrary{calc}

\usepackage{dynkin-diagrams}

\usepackage{xcolor}

\usepackage{enumitem}

\usepackage{booktabs}

\usepackage[a4paper]{geometry}
\geometry{hscale=0.75,vscale=0.8,centering}

\usepackage[utf8]{inputenc}
\usepackage[T1]{fontenc}

\usepackage{url}

\usepackage{amssymb,amsmath,amsthm}

\usepackage[backref=page,colorlinks=true,allcolors=blue]{hyperref} %

\newtheorem{thm}{Theorem}[section]

\newtheorem{lem}[thm]{Lemma}
\newtheorem{prop}[thm]{Proposition}

\newtheorem*{ack}{Acknowledgements}

\theoremstyle{definition}
\newtheorem{defn}[thm]{Definition}
\newtheorem{rem}[thm]{Remark}
\newtheorem{exa}[thm]{Example}

\DeclareMathOperator{\SL}{SL}
\DeclareMathOperator{\SO}{SO}
\DeclareMathOperator{\PSO}{PSO}

\DeclareMathOperator{\GL}{GL}
\DeclareMathOperator{\Sp}{Sp}

\DeclareMathOperator{\PGL}{PGL}

\DeclareMathOperator{\diag}{diag}
\DeclareMathOperator{\Ric}{Ric}
\DeclareMathOperator{\Bl}{Bl}
\DeclareMathOperator{\Vol}{Vol}

\newcommand{\bbC}{\mathbb{C}}

\newcommand{\bbZ}{\mathbb{Z}}
\newcommand{\bbP}{\mathbb{P}}
\newcommand{\bbR}{\mathbb{R}}

\begin{document}

\title{Limits of conical Kähler-Einstein metrics on rank one horosymmetric spaces}

\author{Thibaut Delcroix}
\address{Thibaut Delcroix, Univ Montpellier, CNRS, Montpellier, France}
\email{thibaut.delcroix@umontpellier.fr}
\urladdr{http://delcroix.perso.math.cnrs.fr/}

\date{\today}

\begin{abstract}
We consider families of conical Kähler-Einstein metrics on rank one horosymmetric Fano manifolds, with decreasing cone angles along a codimension one orbit. 
At the limit angle, which is positive, we show that the metrics, restricted to the complement of that orbit, converge to (the pull-back of) the Kähler-Einstein metric on the basis of the horosymmetric homogeneous space, which is a projective homogeneous space. 
Then we show that, on the symmetric space fibers, the rescaled metrics converge to Stenzel's Ricci flat Kähler metric.  
\end{abstract}

\maketitle


\section{Introduction}

Chi Li and Song Sun showed in \cite{Li_Sun_2014} that there exists conical Kähler-Einstein metrics on \(\bbP^2\), with conic singularities along a quadric \(D\subset \bbP^2\), for cone angles \(2\pi\beta\), with \(\beta\in ]1/4,1]\). 
Equivalently, there exists a (singular) Kähler-Einstein metric on the pair \((\bbP^2,(1-\beta)D)\) for \(\beta\in ]1/4,1]\). 
Then they conjectured that these metrics converge in Gromov-Hausdorff sense to the weighted projective space \(\bbP(1,1,4)\), equipped with its orbifold Kähler-Einstein metric. 
This convergence is now well understood thanks to the machinery developped by Chen, Donaldson and Sun to solve the Yau-Tian-Donaldson \cite{CDS2}, but before that Chi Li used numerical analysis to support the conjecture in \cite{Li_2015}, and further observed numerically that the bubble out of this convergence was the \(\bbZ/2\bbZ\)-quotient of the Eguchi-Hanson Ricci flat Kähler metric. 
Our purpose in this paper is to prove exactly (instead of only numerically) the bubbling phenomenon in this case, and in a generalization to limits of conical Kähler-Einstein metrics on rank one horosymmetric varieties. 
The bubbles arising in our generalization are fibrewise Stenzel's Ricci flat Kähler metrics on symmetric spaces of rank one. 

The setting is the following. 
Let \(G\) be a connected complex semisimple group. 
We consider a \(G\)-homogeneous fibre bundle \(X\) on a projective homogeneous space \(G/P\), with fibers \(P\)-equivariantly isomorphic to the unique equivariant compactification \(X_0\) of a rank one semisimple group \(G_0/H_0\). 
In that situation, \(X\) possesses a unique prime \(G\)-stable divisor \(D\), which is the unique closed orbit of \(G\). 
Assume that \(X\) is Kähler-Einstein, and let \([0,b[\subsetneq [0,1[\) be the set of all \(s\) such that \((X,sD)\) admits a singular log Kähler-Einstein metric in \(c_1(X)\). 
We denote by \(\omega_s\) a log Kähler-Einstein metric on \((X,sD)\) which is invariant under a maximal compact subgroup \(K\) of \(G\). Such metrics have conical singularities along \(D\) by \cite{Guenancia_Paun_2016}. 

\begin{thm}
As \(s\) converges to \(b\), the conical Kähler-Einstein metrics \(\omega_s\) restricted to \(X\setminus D\) converge to the pullback of the Kähler-Einstein metric on \(G/P\). 
Furthermore, restricted to a symmetric fiber \(G_0/H_0\), and rescaled, they converge to Stenzel's complete Ricci flat Kähler metric. 
\end{thm}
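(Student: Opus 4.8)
The plan is to reduce the complex Monge-Ampère equations defining the metrics \(\omega_s\) to a family of real Monge-Ampère equations, and then to analyze the limit \(s\to b\) at the level of the convex potentials. Following the horosymmetric dictionary, a \(K\)-invariant Kähler metric in \(c_1(X)\) is encoded by a smooth convex function \(u_s\) on the one-dimensional real space \(\mathfrak{a}^*\) attached to the rank one symmetric space \(G_0/H_0\), with Legendre transform supported on the moment polytope \(\Delta_s\) of \((X,sD)\). Since \(X\) has rank one, the homogeneous part on \(G/P\) contributes only the fixed density \(\prod_{\alpha\in\roots}\langle\alpha,\cdot\rangle\) coming from the roots, and the genuine unknown is a single convex function of one variable. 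The log Kähler-Einstein equation for \((X,sD)\) thus becomes an ordinary differential equation (a one-variable real Monge-Ampère equation) for \(u_s\), whose boundary data on the facet of \(\Delta_s\) dual to \(D\) encodes the cone angle \(2\pi(1-s)\), and whose right-hand side carries the Einstein constant.

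First I would make precise how the polytope \(\Delta_s\) and its weighted measure degenerate as \(s\to b\). The critical value \(b\) is characterized by the barycenter criterion: it is the supremum of \(s\) for which the barycenter of \(\Delta_s\), computed against \(\prod_{\alpha\in\roots}\langle\alpha,\cdot\rangle\,\lebesgue\), lies in the relative interior of the appropriate translate of the valuation cone \(\valcone\); as \(s\uparrow b\) this barycenter reaches the boundary and the facet dual to \(D\) recedes, so that the effective length of \(\Delta_s\) in the fiber direction tends to infinity. After a single integration of the Monge-Ampère ODE, the problem reduces to controlling a first-order quantity together with its inverse moment map, and I would establish uniform local estimates for these on compact subsets of \(X\setminus D\).

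From these estimates I would extract a \(C^\infty_{loc}\) limit of \(\omega_s\) on \(X\setminus D\). The recession of \(\Delta_s\) forces the fiber-direction Hessian of \(u_s\) to vanish in the limit on compact sets, so the limiting metric has no component along the symmetric fibers and descends to \(G/P\); matching the residual equation on the base identifies it with the unique \(G\)-invariant Kähler-Einstein metric on the projective homogeneous space \(G/P\), pulled back to \(X\setminus D\), which proves the first assertion. For the bubble, I would rescale the metric along the fiber by the blow-up factor \(\lambda_s\to\infty\) dictated by the recession, and pass to the limit in the rescaled ODE. Under this scaling the linear Einstein term drops out and the equation converges to the homogeneous real Monge-Ampère equation whose solutions are exactly the \(K_0\)-invariant Ricci-flat Kähler metrics on the affine rank one symmetric space \(G_0/H_0\); its unique complete solution is Stenzel's metric, recognized by comparing the limiting ODE with Stenzel's defining equation in the restricted roots \(\alpha,2\alpha\).

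The main obstacle is the uniform control of the family \(u_s\) near the critical angle. Properness of the relevant functional is lost precisely at \(s=b\), so the standard Kähler-Einstein a priori estimates are unavailable; instead I must exploit the explicit one-variable structure, deriving the estimates directly from the integrated Monge-Ampère ODE and from the precise asymptotics of the weighted density \(\prod_{\alpha\in\roots}\langle\alpha,\cdot\rangle\) as the moment image escapes to infinity. Pinning down the correct rescaling \(\lambda_s\) and verifying that the two regimes are compatible — that is, that the rescaled potentials genuinely converge rather than merely subconverge to Stenzel's metric — is the delicate analytic heart of the argument.
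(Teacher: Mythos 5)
Your overall skeleton---reduce to a one-variable real Monge--Amp\`ere ODE via the horosymmetric dictionary, characterize \(b\) by a barycenter criterion, prove collapse to the base on compact sets, then rescale along the fiber to produce Stenzel's metric---is the same as the paper's. But the mechanism you propose for the degeneration is wrong, and this is where the real content lies. In the paper's normalization the moment interval of \((X,sD)\) is \([0,\lambda_{ac}-s]\): as \(s\uparrow b\leq 1<\lambda_{ac}\) it \emph{shrinks} to the non-degenerate interval \([0,\lambda_{ac}-b]\). Nothing about the polytope recedes to infinity, so your claims that ``the facet dual to \(D\) recedes'', that ``the effective length of \(\Delta_s\) tends to infinity'', and that this recession ``forces the fiber-direction Hessian of \(u_s\) to vanish on compact sets'' are unsupported (indeed false): the polytope data stays uniformly bounded and non-degenerate for all \(s\in[0,b]\). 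What actually escapes to infinity is a point of the \emph{domain}: the minimizer \(x_s\) of \(\nu_s=2u_s+j\) (with \(j=-\ln J\)). And this is not read off from any geometry of \(\Delta_s\); it is proved by contradiction using the non-existence of a solution at \(s=b\): one first establishes \(s\)-uniform a priori estimates (the minimum value \(m_s\) of \(\nu_s\) is bounded, and \(\nu_s\) grows at least linearly away from \(x_s\) with uniform constants), and then observes that if \(x_s\) had a bounded subsequence, the solutions \(u_{s_k}\) would converge locally uniformly to a solution of the equation at \(s=b\), contradicting the definition of \(b\) as a supremum which is not attained. Your proposal never invokes the non-existence at \(b\); since the polytope itself does not degenerate, your argument has no driving force for the collapse.

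The same gap propagates to the bubble. Once \(x_s\to\infty\) is known, the collapse follows from the integrated equation \(u_s'(x)=V^{-1}\bigl(\int_0^x e^{-\nu_s}\bigr)\) together with the uniform linear growth of \(\nu_s\): these make \(u_s'\) exponentially small on \([0,x_s]\), i.e.\ \(u_s-u_s(0)\to 0\) in \(C^1_{loc}\), which is the collapse statement. The correct rescaling is then \(\underline{u}_s=e^{2u_s(0)/(r+1)}(u_s-u_s(0))\) with \(r=\operatorname{Card}(\Phi_s^+)\); the factor blows up because \(u_s(0)\to+\infty\), which is again a consequence of \(x_s\to\infty\) and the boundedness of \(m_s\) (since \(2u_s(0)=m_s-j(x_s)+O(1)\) and \(j(x_s)\to-\infty\) linearly in \(x_s\)), not of any recession of \(\Delta_s\); your ``blow-up factor \(\lambda_s\) dictated by the recession'' therefore has no definition in your framework, and you yourself flag pinning it down as the unresolved ``analytic heart''. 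Finally, note that the paper only proves convergence of the \(\underline{u}_s\) \emph{up to a subsequence}, to a solution of \(v_0(u')u''=CJ\) for some \(C>0\); since any such solution is a Stenzel potential, subsequential convergence suffices for the theorem, so your insistence on genuine convergence demands more than is needed and more than the argument establishes.
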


We note that the divisor \(D\) is a projective homogeneous space, and as such, is a Fano Kähler-Einstein manifold. 
However in the general case considered here, it is not assumed to be an ample divisor in \(X\), or a multiple of the anticanonical divisor. 
The simplest example where the fibration structure is non-trivial is given by \(\bbP^2\times \bbP^2\) equipped with the diagonal \(\SL_3\)-action, see Example~\ref{P^2^2}.

The proof relies on the translation of the equation governing the existence of \(K\)-invariant log Kähler-Einstein metrics to a single ODE. 
This is obtained through the convex geometric translation of Kähler geometry of horosymmetric varieties obtained by the author in \cite{DelHoro}.  
One then follows the strategy of \cite{DelKE}, inspired by \cite{Wang_Zhu_2004}, and already specialized to the rank one case by Biquard and the author in \cite{KRFSS}. 
The added input is to examine precisely what fails in the existence result as \(s\to b\), and translate this into estimates on the sequence of convex potentials. 

As should be obvious from the proof, and comparison with \cite{HoroCMab} for example, the same methods should apply with very limited changes, to more general settings. 
For example, one could use the same method to show that Stenzel's metric appear in bubbling phenomenon for various continuity paths or flows, such as the Kähler-Ricci flow, or the twisted Kähler-Einstein continuity path for the existence of Kähler-Einstein metrics.
The methods apply as well on a singular rank one horosymmetric variety, and in that setting we exhibited examples without Kähler-Einstein metrics in \cite{KRFSS}. 

Several generalizations of this work are of interest, let us comment on some of these to conclude the introduction. 
First, rank one horosymmetric varieties do not exhaust the possibilities for cohomogeneity one manifolds. However, the Kähler geometry of such manifolds in general is not yet fully understood. We intend to fill this gap one day, and we expect Stenzel's metrics to stealth into this setting as well. 

If one wishes to extend the picture to manifolds without symmetries, a natural replacement to Stenzel's metrics is provided by Tian-Yau's asymptotically conical complete Ricci flat Kähler metrics on the complement of a divisor \cite{Tian_Yau_1991}. 
Biquard and Guenancia informed the author that they studied this problem in general by gluing techniques (with a strategy analogous to \cite{Biquard_Guenancia_2022}), obtaining a wide generalization. 

Finally, it should be noted that Tian-Yau's metrics are constructed on the complement of a smooth divisor, representing a multiple of the anticanonical line bundle. 
This is of course not the only setting of interest, but the case of singular divisor for example, is wide open in general. 
The construction by Biquard and the author of asymptotically conical Ricci flat Kähler metrics on rank two symmetric spaces, with singular tangent cones in \cite{KRFSS}, should be revisited with the point of view of limits of conical Kähler-Einstein metrics, and may provide further insights on possible generalizations. 

\begin{ack}
The author is partially funded by ANR-21-CE40-0011 JCJC project MARGE and ANR-18-CE40-0003 JCJC project FIBALGA.
We thank Olivier Biquard, Henri Guenancia and Chenyang Xu for discussions related to the topic of this note.  
\end{ack}

\section{Rank one horosymmetric manifolds, and their Kähler geometry}

We extract from the general setting of horosymmetric varieties in \cite{DelHoro, HoroCMab} the Kähler geometry tools to translate our problem into the study of a single ODE. 

\subsection{Rank one horosymmetric manifolds}

Let \(G\) be a connected, semisimple complex algebraic group. 
We always denote by the corresponding \emph{fraktur} character the Lie algebra of a group, for example, \(\mathfrak{g}\) is the Lie algebra of \(G\). 

\begin{defn}
An algebraic subgroup \(H\) of \(G\) is \emph{horosymmetric} if there exists a parabolic subgroup \(P\) of \(G\), a Levi decomposition \(P=LP^u\) of \(P\) and a complex Lie algebra involution \(\sigma\) of \(L\) such that 
\[\mathfrak{h} = \mathfrak{l}^{\sigma} \oplus \mathfrak{p}^u. \]
It is of \emph{rank one} if furthermore the reductive symmetric space \(L/L^{\sigma}\) is a rank one symmetric space. 
\end{defn}

From now on we assume that \(G\), \(P=LP^u\), \(H\) are fixed, with the assumption that the action of \(L\) on \(L/L\cap H\) does not factor through a torus. 
If \(H\) is a horosymmetric subgroup of rank one, we also say that \(G/H\) is a rank one horosymmetric homogeneous space.

One should be aware that we do not require the action of \(L\) to be effective, nor to have a finite kernel, otherwise the class of rank one horosymmetric spaces would be reduced to rank one symmetric spaces. 
Thanks to the classification of rank one symmetric spaces, the last sentence of the definition means that \(L\) admits a semisimple quotient \(G_0\) such that the action of \(L\) on \(L/L\cap H\) factors through \(G_0\), with isotropy group \(H_0\), and up to isogeny, \(G_0\) and \(H_0)\) are in the following list: 
\begin{itemize}
\item \(G_0 = \SO_{n+2}\) and \(H_0=\SO_{n+1}\), for some \(n\geq 0\)  
\item \(G_0 = \SL_{n+1}\) and \(H_0=\GL_n\), for some \(n\geq 2\) 
\item \(G_0 = \Sp_{2n}\) and \(H_0=\Sp_2\times \Sp_{2n-2}\), for some \(n\geq 3\)
\item \(G_0=F_4\) and \(H_0=\SO_9\). 
\end{itemize}

The first case with \(n=0\) is special: in this case \(G_0=\SO_2\simeq \bbC^*\) is a torus, and the homogeneous space si called horospherical (of rank one). 
We will not consider this case in the present paper since in that situation, conical Kähler-Einstein metrics exist only for one given cone angle, so it does not make sense to take a limit. 

\begin{rem}
We can describe \(G/H\) as the fiber bundle with fiber \(G_0/H_0\) associated to the \(P\)-principal fiber bundle \(G\to G/P\). This is the quotient of \(G\times G_0/H_0\) by the action of \(P\) given by \(p\cdot(g,x)=(gp^{-1},p\cdot x)\) where the action of \(P\) on \(G_0/H_0\) is via the morphisms \(P/P^u\simeq L \to G_0\).  
\end{rem}

\begin{exa}
\label{P^2^2}
Let \(G=\SL_3\) and consider the natural diagonal action on \(\bbP^2\times \bbP^2\). 
There are two orbits: the diagonal and its complement. 
The complement \(\bbP^2\times \bbP^2\setminus \diag(\bbP^2)\) is a rank one horosymmetric space. 
Indeed, consider the stabilizer \(H\) of the point \([1:0:0],[0:1:0])\). 
It is the subgroup generated by the maximal torus \(T\) of diagonal matrices, and the unipotent radical of the parabolic subgroup \(P\) of \(G\), which is the stabilizer of the hyperplane \(\{z=0\}\) in \(\bbP^2\) with homogeneous coordinates \([x:y:z]\). 
We can choose the Levi subgroup \(L=S(\GL_2\times \GL_1)\) of block diagonal matrices in \(P\). 
Then \(L\cap H=T\), the action of \(L\) on \(L\cap H\) factors through \(\PGL_2\) and the rank one symmetric space fiber is \(\PGL_2/\PSO_2\).
\end{exa}

\subsection{Maximal compact group action}

As in \cite{DelHoro, HoroCMab}, we make the following choices to reduce Kähler geometry of \(G/H\) to convex geometry and combinatorics. 

Choose \(T_s\) a torus in \(L\), maximal for the property that \(\sigma\) acts on \(T_s\) as the inversion. 
Choose \(T\) a maximal \(\sigma\)-stable torus of \(L\) containing \(T_s\). 
Let \(Q\) denote the parabolic subgroup of \(G\) which is opposite to \(P\), that is, \(Q\cap P=L\). 
Let \(B\) be a Borel subgroup of \(G\) with \(T\subset B\subset Q\) such that, if \(\beta\) is a root of \(B\cap L\) (with respect to the maximal torus \(T\)), then either \(\sigma(\beta)=\beta\) or \(-\sigma(\beta)\) is a root of \(B\cap L\).  
We denote by \(\Phi^+\) the roots of \(B\), by \(\Phi_{Q^u}\) the roots of \(Q^u\), and by \(\Phi_s^+\) the roots of \(B\cap L\) which are not fixed by \(\sigma\). 

Fix \(K\) a maximal compact subgroup. 
Let \(\mathfrak{a}_s\) denote the real vector space \(\mathfrak{t}_s\cap i\mathfrak{k}\). 

Under the \emph{rank one} assumption, the torus \(T_s\) is of complex dimension one, and \(\mathfrak{a}_s\) is a one dimensional real vector space. 
The restriction of any root of \(\Phi_s^+\) to \(\mathfrak{a}_s\) is non-zero, and they all define the same non-negative closed half-space \(\mathfrak{a}_s^+\subset \mathfrak{a}_s\), called the \emph{positive restricted Weyl chamber}. 

\begin{prop}
The image of the positive restricted Weyl chamber \(\mathfrak{a}_s^+\) under the map \(\mathfrak{g}\to G/H, x\mapsto \exp(x)H\) is a fundamental domain for the action of \(K\) on \(G/H\). 
\end{prop}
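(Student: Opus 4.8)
The plan is to deduce the statement from the polar (generalized Cartan) decomposition of the symmetric fiber, after reducing to it via the flag variety. First I would record the behaviour of \(K\) on the base: by the Iwasawa decomposition \(K\) acts transitively on the projective homogeneous space \(G/P\), and the stabilizer of the base point \(eP\) is \(K\cap P=K\cap L=:K_L\), a maximal compact subgroup of \(L\) (the intersection \(K\cap P^u\) is trivial since \(P^u\) is unipotent). Since \(T_s\subset L\) we have \(\exp(\mathfrak{a}_s)\subset L\subset P\), so the image of \(x\mapsto\exp(x)H\) lies in the fiber \(F:=P\cdot eH\cong L/(L\cap H)\) over \(eP\). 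As \(K\) is transitive on \(G/P\), every \(K\)-orbit in \(G/H\) meets \(F\); moreover any \(k\in K\) carrying a point of \(F\) back into \(F\) must fix \(eP\), hence lie in \(K_L\). Therefore the \(K\)-orbits in \(G/H\) are in bijection with the \(K_L\)-orbits in \(F\), and it suffices to prove that \(x\mapsto\exp(x)(L\cap H)\) identifies \(\mathfrak{a}_s^+\) with a fundamental domain for the \(K_L\)-action on \(F\).

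Next I would translate this into a statement purely about the symmetric space. By the rank one hypothesis and the description of \(G/H\) as a bundle in the remark above, the \(L\)-action on \(F\) factors through the semisimple quotient \(G_0\) with isotropy \(H_0\), giving \(F\cong G_0/H_0\), and \(K_L\) acts through its image \(K_0\), a maximal compact subgroup of \(G_0\). We may arrange \(K_0\) to be \(\sigma\)-stable, with associated Cartan involution \(\theta\) commuting with \(\sigma\); writing \(\mathfrak{g}_0=\mathfrak{k}_0\oplus\mathfrak{p}_0\) and \(\mathfrak{g}_0=\mathfrak{h}_0\oplus\mathfrak{q}_0\) for the \(\theta\)- and \(\sigma\)-eigenspace decompositions, the image of \(\mathfrak{a}_s\) is a maximal abelian subspace of \(\mathfrak{p}_0\cap\mathfrak{q}_0\), one-dimensional precisely because \(L/(L\cap H)\) has rank one. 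The claim becomes the polar decomposition \(G_0=K_0\exp(\mathfrak{a}_s)H_0\), together with the assertion that each \(K_0\)-orbit in \(G_0/H_0\) meets \(\exp(\mathfrak{a}_s^+)H_0\) in exactly one point.

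I would then handle uniqueness and existence separately. For uniqueness, the restricted root system of \((\mathfrak{g}_0,\mathfrak{a}_s)\) has rank one, so the little Weyl group \(W=N_{K_0\cap H_0}(\mathfrak{a}_s)/Z_{K_0\cap H_0}(\mathfrak{a}_s)\) acts on the line \(\mathfrak{a}_s\) as \(\{\pm 1\}\); hence the closed half-line \(\mathfrak{a}_s^+\) is a strict fundamental domain for \(W\), and the standard fact that \(\exp(x)H_0\) and \(\exp(x')H_0\) with \(x,x'\in\mathfrak{a}_s\) are \(K_0\)-conjugate only if \(x'\in W\cdot x\) forces \(x=x'\) once both lie in \(\mathfrak{a}_s^+\). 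The crux, and the step I expect to be the main obstacle, is existence, i.e. surjectivity of \(K_0\times\mathfrak{a}_s^+\times H_0\to G_0\): this is the Cartan decomposition for the reductive symmetric space \(G_0/H_0\). I would either invoke it from the structure theory of symmetric spaces, or prove it directly by the usual extremization argument, showing that every \(K_0\)-orbit meets the flat \(\exp(\mathfrak{a}_s)H_0\) by optimizing a \(K_0\)-invariant function on the (compact) orbit, which forces the relevant tangent direction into \(\mathfrak{p}_0\cap\mathfrak{q}_0\) and then, after acting by \(K_0\cap H_0\), into \(\mathfrak{a}_s^+\). The rank one case is especially transparent on the model examples, e.g. the affine quadric \(G_0/H_0=\SO_{n+2}/\SO_{n+1}\), where the orbits of \(K_0=\SO_{n+2}(\bbR)\) are exactly the level sets of a single nonnegative invariant parametrized by \(\mathfrak{a}_s^+\).
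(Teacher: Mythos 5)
This proposition is stated in the paper without any proof: it is imported from the horosymmetric framework of \cite{DelHoro, HoroCMab}, so there is no in-paper argument to compare against. Your proposal supplies the standard argument, and its overall structure is correct and well-targeted: reduce to the fiber of \(G/H\to G/P\) using transitivity of \(K\) on the flag variety, identify that fiber with the rank one symmetric space \(G_0/H_0\), then invoke the polar (Cartan) decomposition \(G_0=K_0\exp(\mathfrak{a}_s)H_0\) of a non-Riemannian reductive symmetric space, with uniqueness coming from the little Weyl group acting on \(\mathfrak{a}_s\) as \(\{\pm 1\}\) — the very fact the paper later quotes from \cite{Rossmann}. You are also right that the decomposition \(G_0=K_0\exp(\mathfrak{a}_s)H_0\) is the crux; it is a theorem (Flensted-Jensen, Rossmann, Matsuki) and citing it is the intended solution rather than an evasion.

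One written justification is a non sequitur and should be repaired: the equality \(K\cap P=K\cap L\) does not follow from triviality of \(K\cap P^u\). A compact subgroup of \(P=L\ltimes P^u\) can meet \(P^u\) trivially without being contained in the chosen Levi \(L\): in \(\SL_2\), conjugating the diagonal circle by a nontrivial unipotent upper-triangular element produces a compact subgroup of the Borel which meets the unipotent radical trivially but is not diagonal. What makes your claim true is a compatibility between \(K\) and the other choices — implicit in the paper, and needed anyway for \(\mathfrak{a}_s=\mathfrak{t}_s\cap i\mathfrak{k}\) to be one-dimensional — namely that the compact conjugation \(\theta\) defining \(K\) stabilizes \(T\), hence sends every root space \(\mathfrak{g}_{\alpha}\) to \(\mathfrak{g}_{-\alpha}\) and maps \(P\) to the opposite parabolic \(Q\). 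Then \(k\in K\cap P\) gives \(k=\theta(k)\in P\cap Q=L\), and the same compatibility shows \(K\cap L=L^{\theta}\) is a maximal compact subgroup of \(L\). With this repair, the remaining steps (the bijection between \(K\)-orbits in \(G/H\) and \(K_L\)-orbits in the fiber, maximal compactness of the image \(K_0\) of \(K_L\) in \(G_0\), and the rank one uniqueness argument) are sound.
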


\subsection{Invariant metrics as functions}

On the homogeneous space \(G/H\), we will work with metrics in (a multiple of) the anticanonical class \(c_1(K_{G/H}^{-1})\). 
It should be noted that in general, the space of \(K\)-invariant Kähler classes up to scaling may be of positive dimension. 

We let \(\chi_H\) be the character of \(H\) defined by 
\[ \chi_H = \left.\left(\sum_{\alpha\in \Phi_s^+\cup \Phi_{Q^u}} \alpha\right)\right|_{H} \]
We will consider it as the \(\sigma\)-invariant element of \(\mathfrak{t}^*\) defined by 
\[ \chi_H = \sum_{\alpha\in \Phi_s^+\cup \Phi_{Q^u}} \frac{\alpha+\alpha\circ\sigma}{2} \]
This is the isotropy character of \(K_{G/H}^{-1}\), that is, the character corresponding to the one-dimensional representation of \(H\) defined by the fiber at the identity coset. 
We let also \(\pi\) denote the quotient map \(G\to G/H\). 

Let \(\omega\) be a \(K\)-invariant closed real (1,1)-form in \(c_1(K_{G/H}^{-1})\). 
Recall that it is the curvature of a \(K\)-invariant hermitian metric \(h\) on \(K_{G/H}^{-1}\). 
Note that \(h\) is well-defined only up to a multiplicative constant. 

Let \(\gamma\) be the primitive generator of \(X^*(T/T\cap H)\) which evaluates non-negatively on elements of \(\mathfrak{a}_s^+\). 
Let \(\gamma^*\) be the unique element of \(\mathfrak{a}_s^+\) such that \(\gamma(\gamma^*)=1\). 

We choose \(\xi\) a non-zero element of the fiber of \(K_{G/H}^{-1}\) at the identity coset. 
Define \(u_{h}:\bbR\to \bbR\) by 
\[ u_{h}(x)= -\ln \lvert \exp(x\gamma)\cdot \xi\rvert_h. \]
This is an even function which fully determines \(h\), hence \(\omega\). 
Even though it is only determined by \(\omega\) up to an additive constant, we will denote by \(u_{\omega}\) the function associated to an arbitrary choice of \(h\) and \(\xi\). 

The form \(\omega\) is Kähler if and only if \(u_{\omega}''>0\) and \(u'(x) < \min_{\alpha\in \Phi_{Q^u}} \frac{\kappa(\chi_H,\alpha)}{\kappa(\gamma,\alpha)}\).

\begin{prop}
Assume that \(\omega\) is Kähler, then up to an additive constant, \(u_{\Ric(\omega)}\) is defined by, for \(x>0\), 
\begin{equation} 
\label{Ricci_toric_potential}
u_{\Ric(\omega)}(x) = -\frac{1}{2}\ln \left( u''(x) \prod_{\beta\in \Phi_s^+}\frac{\kappa(\beta,u'(x)\gamma)}{\sinh(2\beta(x\gamma^*))}\prod_{\alpha\in \Phi_{Q^u}}\kappa(\alpha,\chi_H-u'(x)\gamma) \right) 
\end{equation}
\end{prop}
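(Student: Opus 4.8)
The plan is to reduce the statement to a computation of the complex Monge–Ampère measure $\omega^n/n!$ as a function of $u$. First I would observe that $\Ric(\omega)$ is again $K$-invariant, since the Ricci form is natural and $\omega$ is $K$-invariant, and that it lies in $c_1(K_{G/H}^{-1})$; hence it is the curvature of a $K$-invariant Hermitian metric on $K_{G/H}^{-1}$ and is fully encoded by a potential $u_{\Ric(\omega)}$ of the type introduced above. The key point is that this metric is precisely the one whose associated volume form is $\omega^n/n!$: indeed $\Ric(\omega)$ is by definition the curvature of the metric on $K_{G/H}^{-1}$ induced by $\omega^n/n!$. Because $u_h(x)=-\ln\lvert\exp(x\gamma)\cdot\xi\rvert_h$ involves a single logarithm whereas the volume form controls the squared norm, this gives directly $u_{\Ric(\omega)}(x)=-\tfrac12\ln\big(\text{density of }\omega^n/n! \text{ at } \exp(x\gamma^*)H\big)$ up to the additive constant coming from the scaling ambiguity of $h$ and $\xi$. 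Thus everything reduces to evaluating that density.

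To compute it I would work on the open orbit and, using the previous Proposition, restrict to the slice $\exp(\mathfrak{a}_s^+)$, pulling everything back along $\pi$. At the point $\exp(x\gamma^*)H$ the complexified tangent space decomposes into the radial line spanned by $\mathfrak{t}_s$, the symmetric-space directions indexed by $\Phi_s^+$, and the parabolic directions indexed by $\Phi_{Q^u}$. By $K$-invariance and weight considerations the Hermitian metric $\omega$ is block-diagonal for this decomposition, so $\omega^n$ is the product of the radial factor and the factors attached to each root. The radial block is governed by the real Hessian of $u$, which in the rank one case (where $\mathfrak{a}_s$ is one-dimensional) is simply the scalar $u''(x)$. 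The root blocks are eigenvalues of the metric expressed through the moment map $du=u'(x)\gamma\in\mathfrak{t}^*$: pairing with $\kappa$ produces the factors $\kappa(\beta,u'(x)\gamma)$ along $\Phi_s^+$ and, using that $\chi_H$ is the isotropy character of $K_{G/H}^{-1}$, the complementary slopes $\kappa(\alpha,\chi_H-u'(x)\gamma)$ along $\Phi_{Q^u}$. The Kähler condition recorded above guarantees these quantities are positive.

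It then remains to compare $\omega^n$ with the squared norm $\lvert\exp(x\gamma)\cdot\xi\rvert^2$ of the chosen frame, i.e.\ to divide out the reference volume coming from the orbit map. The Jacobian of the $K$-orbit parametrization in the symmetric directions is exactly what produces the factors $\sinh(2\beta(x\gamma^*))$ in the denominator, this being the volume growth of the $K$-orbits in the underlying rank one symmetric space $G_0/H_0$; the directions in $\Phi_{Q^u}$ are the unipotent, ``horospherical'' directions of the fibration over $G/P$ and are affine, so they contribute their moment-map factor with no accompanying $\sinh$. Assembling the radial, symmetric and parabolic contributions, taking $-\tfrac12\ln$ and absorbing all numerical and frame-dependent constants into the admissible additive constant yields the claimed formula.

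The main obstacle is the explicit orbit-map Jacobian in the middle step: one must check that the metric genuinely block-diagonalizes in the chosen root decomposition, identify each eigenvalue with the correct pairing $\kappa(\beta,u'(x)\gamma)$ or $\kappa(\alpha,\chi_H-u'(x)\gamma)$, and pin down precisely the $\sinh(2\beta(x\gamma^*))$ factors, including the internal factor of $2$, together with the overall $\tfrac12$. This is a specialization of the general horosymmetric volume computation of \cite{DelHoro, HoroCMab}; the rank one hypothesis is what collapses the Hessian to the one-variable second derivative $u''$ and makes the restricted root data of $\Phi_s^+$ enter through the hyperbolic functions of a single real parameter $x$.
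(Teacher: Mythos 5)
There is a genuine gap, and it sits exactly at the point the paper singles out as the one thing needing an argument. Your computation asserts that the unipotent directions indexed by \(\Phi_{Q^u}\) are ``affine'' and therefore contribute only their moment-map factor \(\kappa(\alpha,\chi_H-u'(x)\gamma)\) with no accompanying Jacobian. That is false as stated: in the general horosymmetric volume computation (the one in \cite{DelHoro} that your sketch is implicitly reproducing), each root \(\alpha\in\Phi_{Q^u}\) contributes an additional stretch factor \(e^{2\alpha(x\gamma^*)}\), coming from the fact that the adjoint action of \(\exp(x\gamma^*)\) scales the frame vector in the root direction \(\alpha\) by \(e^{\alpha(x\gamma^*)}\) — the unipotent directions are precisely the ones with purely exponential (rather than \(\sinh\)) volume growth along the split torus. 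So a correct execution of your plan produces the formula with the extra factor
\[ \prod_{\alpha\in \Phi_{Q^u}} e^{2\alpha(x\gamma^*)} \]
inside the logarithm, and the proposition as stated is only recovered once one proves this product is identically \(1\).

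That last step is not automatic and is the actual content here: the paper proves it by noting that the restricted Weyl group reflection on \(\mathfrak{a}_s\) is induced (by \cite{Rossmann}) by an element \(w\) of the Weyl group of \(L\), which permutes \(\Phi_{Q^u}\); hence \(\sum_{\alpha\in\Phi_{Q^u}}\alpha\) is \(w\)-invariant, so \(\sum_{\alpha\in\Phi_{Q^u}}\alpha(x\gamma^*)=\sum_{\alpha\in\Phi_{Q^u}}\alpha(-x\gamma^*)=0\) and the product of exponentials is \(1\). Note that the individual factors \(e^{2\alpha(x\gamma^*)}\) are \emph{not} equal to \(1\); only their product is, and only because of this symmetry. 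Your proposal gets the right-hand side right, but via an incorrect intermediate claim and without the Weyl-group invariance argument that actually removes these factors; everything else in your outline (the identification of \(\Ric(\omega)\) with the curvature of the volume-form metric on \(K_{G/H}^{-1}\), the factor \(\tfrac12\) from comparing \(-\ln\lvert\cdot\rvert_h\) with the squared norm, the radial block \(u''\), and the \(\sinh(2\beta(x\gamma^*))\) Jacobian in the symmetric directions) is consistent with the general formula the paper cites.
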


From a direct application of \cite{DelHoro}, in the product in formula~\eqref{Ricci_toric_potential}, there should be the factor 
\[ \prod_{\alpha\in \Phi_{Q^u}} e^{2\alpha(x\gamma^*)} \]
but in our situation, this is equal to \(1\) for all \(x\in \bbR\). 
Indeed, the action of \(K\) induces, on \(\mathfrak{a}_s\), the action of the restricted Weyl group, which in our rank one case is the reflection with respect to the origin. 
By \cite{Rossmann}, this reflection on \(\mathfrak{a}_s\) is induced by the action on \(\mathfrak{t}\) of an element \(w\) of the Weyl group of \(L\) with respect to \(T\). 
Any element of this Weyl group induces a permutation of the roots in \(\Phi_{Q^u}\). 
As a consequence, \(\sum_{\alpha\in \Phi_{Q^u}} \alpha\) is invariant under \(w\), and thus for any \(x\in \bbR\), 
\(\sum_{\alpha\in \Phi_{Q^u}} \alpha (x\gamma^*) = \sum_{\alpha\in \Phi_{Q^u}} \alpha (-x\gamma^*) = 0\).

\subsection{On the toroidal equivariant compactification}

Any rank one symmetric space \(G_0/H_0\) admits a unique \(G_0\)-equivariant compactification \(X_0\), which is smooth (it is either a projective space or a quadric).
In this paper, for a rank one horosymmetric space \(G/H=(G\times G_0/H_0)/P\), we consider the \(G\)-equivariant compactification \(X\) given by the fiber bundle \((G\times X_0)/P\) with fiber \(X_0\), called the toroidal compactification.
By definition, it is smooth as well.  

\begin{exa}
For \(G/H=\SL_3/\langle T, P^u\rangle = \bbP^2\times \bbP^2\setminus \diag(\bbP^2)\), the toroidal compactification is \(X=\Bl_{\diag(\bbP^2)}(\bbP^2\times \bbP^2)\). 
\end{exa}

In general there can be more than one \(G\)-equivariant compactification, as is obvious in the above example: \(\bbP^2\times\bbP^2\) is also an equivariant compactification. 
However, for all compactifications, there is only one added orbit, and it is a divisor only in the toroidal compactification. 

Let \(D=X\setminus (G/H)\) denote the closed \(G\)-orbit in \(X\), which is the unique \(G\)-stable prime divisor on \(X\).  

Let \(\lambda_{ac} = 1 + \sum_{\alpha\in \Phi_s^+\cup \Phi_{Q^u}}\alpha(\gamma^*) = 1 + \sum_{\alpha\in \Phi_s^+}\alpha(\gamma^*)\). 

\begin{prop}
A smooth \(K\)-invariant Kähler form \(\omega\) in \(c_1(K_{G/H}^{-1})\) extends to \(X\) as a locally bounded Kähler current in \(c_1(X)-s[D]\) if and only if 
\( u'_{\omega}(\bbR_+) = [0,\lambda_{ac}-s[ \).  
\end{prop}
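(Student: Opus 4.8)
The plan is to exploit the dictionary from \cite{DelHoro} between $K$-invariant metrics and the convex function $u_\omega$, combined with a careful analysis of the extended current near the closed orbit $D$. Since $D$ lies at ``$x\to+\infty$'' along the slice $\exp(x\gamma)H$, the whole question reduces to the asymptotic behaviour of $u_\omega$ in that direction, so I would organise the proof around the limit $c:=\lim_{x\to+\infty}u'_\omega(x)$.

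First I would dispose of the easy structural half coming from positivity. As $\omega$ is Kähler on $G/H$ we have $u''_\omega>0$, and as $u_\omega$ is even its derivative is odd with $u'_\omega(0)=0$. Hence $u'_\omega$ is strictly increasing on $\bbR_+$ and its image is automatically a half-open interval $[0,c[$ with $c\in\,]0,+\infty]$. The entire content of the proposition is therefore the identification $c=\lambda_{ac}-s$, and I must show it is both necessary and sufficient for the extension as a locally bounded Kähler current in $c_1(X)-s[D]$.

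Next I would fix explicit local holomorphic coordinates transverse to $D$. Using $X=(G\times X_0)/P$ and the fact that $X_0$ is the smooth rank one compactification of $G_0/H_0$ (a projective space or a quadric) whose boundary is a single smooth divisor, I would take a coordinate $z$ with $|z|=e^{-x}$ cutting out $D$, the remaining coordinates parametrising $D$ and the symmetric fibre. Comparing the frame $\xi$ of $K_{G/H}^{-1}$, transported by $\exp(x\gamma)$, with a local frame of the extension $K_X^{-1}$, the outcome should be that $K_X^{-1}$ differs from the naive extension of $K_{G/H}^{-1}$ by the multiple $\lambda_{ac}[D]$: the summand $1$ accounts for the transverse $\bbC^*$-direction in the fibre $X_0$, while $\sum_{\alpha\in\Phi_s^+}\alpha(\gamma^*)$ is exactly the contribution of the restricted roots, visible through the $\sinh(2\beta(x\gamma^*))$ factors in \eqref{Ricci_toric_potential}. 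Consequently a locally bounded positive current extending $\omega$ can represent $c_1(X)-s[D]$ only when its potential grows like $(\lambda_{ac}-s)\,x+O(1)$, and one checks that this slope stays below the $\Phi_{Q^u}$-bound of the Kähler cone, so such an extension remains Kähler.

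The heart of the argument, and the step I expect to be the main obstacle, is to convert the slope condition into the statement about local boundedness of the current, and conversely. Writing the local weight of $\omega$, viewed in $c_1(X)-s[D]$, as $u_\omega(x)$ minus the linear model $(\lambda_{ac}-s)\,x$ furnished by the chosen frame, I would show that this difference stays bounded as $x\to+\infty$ if and only if $c=\lambda_{ac}-s$. Any nonzero asymptotic discrepancy $c-(\lambda_{ac}-s)$ produces a linearly divergent weight: for one sign the extended current acquires a strictly positive Lelong number along $D$ and fails to be locally bounded, while for the other sign the frame comparison shows the class is not $c_1(X)-s[D]$ or positivity breaks down; either way the stated regularity and class force $c=\lambda_{ac}-s$, and conversely this value makes the difference bounded and yields a genuine locally bounded Kähler current. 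The delicate point is to carry the symmetric-fibre directions through this transverse estimate: the $\sinh(2\beta(x\gamma^*))$ terms grow exponentially as $x\to+\infty$, and one must verify that their contribution is precisely absorbed by the smoothness of $X_0$ at its boundary divisor, so that the only surviving obstruction is the scalar matching $c=\lambda_{ac}-s$. Combining this with the monotonicity of the first step and the class computation of the third then gives the biconditional.
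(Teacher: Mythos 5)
Your proposal cannot be compared with an internal argument, because the paper contains none: this proposition, like the rest of Section~2, is imported without proof from \cite{DelHoro}. Judged on its own terms, your skeleton is the natural one. The monotonicity step is correct, the reduction to the asymptotic slope \(c=\lim_{x\to+\infty}u'_\omega(x)\) is the right move, and your attribution of the two pieces of \(\lambda_{ac}\) --- the \(1\) coming from the coefficient of the \(G\)-stable divisor \(D\) in \(-K_X\) (the transverse direction), and \(\sum_{\alpha\in\Phi_s^+}\alpha(\gamma^*)\) coming from the degeneration of the fibre directions recorded by \(J\) --- is correct. Be aware, though, that the frame comparison you only assert (``the outcome should be\dots'') is essentially the entire content of the statement: it requires the local structure of the toroidal compactification near the closed orbit and the expression of \(-K_X\) as \(D\) plus a combination of colors, and no manipulation of formula~\eqref{Ricci_toric_potential} substitutes for it.

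The genuine gap is the equivalence claimed in your last step: ``this difference stays bounded as \(x\to+\infty\) if and only if \(c=\lambda_{ac}-s\).'' Only the implication ``bounded \(\Rightarrow c=\lambda_{ac}-s\)'' is true, so your argument proves necessity of the gradient condition but not sufficiency. Concretely, take
\[ u(x)=(\lambda_{ac}-s)\left(\sqrt{1+x^2}-\ln\left(1+\sqrt{1+x^2}\right)\right), \]
which is smooth, even, strictly convex, satisfies the Kähler bounds, and has \(u'(\bbR_+)=[0,\lambda_{ac}-s[\), yet \((\lambda_{ac}-s)x-u(x)\sim(\lambda_{ac}-s)\ln x\to+\infty\). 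For this potential the weight of the extension, measured against a smooth reference metric on the bundle representing \(c_1(X)-s[D]\), diverges to \(-\infty\) like \(-(\lambda_{ac}-s)\ln(-\ln\lvert z\rvert)\): the extension is a closed positive (indeed Kähler) current in the correct class, with \emph{zero} Lelong number along \(D\), but it is not locally bounded --- and since any two positive extensions in the same class differ by a multiple of \([D]\), no locally bounded extension exists. Your Lelong-number dichotomy only detects linear discrepancies \(c\neq\lambda_{ac}-s\); it is blind to these sublinear divergences. What your argument actually establishes is that \(u'_\omega(\bbR_+)=[0,\lambda_{ac}-s[\) characterizes extensions as Kähler currents in \(c_1(X)-s[D]\) with zero Lelong number along \(D\) (equivalently, with full Monge--Ampère mass in the sense of \cite{BBEGZ}), whereas local boundedness is equivalent to the strictly stronger condition that \((\lambda_{ac}-s)\lvert x\rvert-u_\omega(x)\) be bounded. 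To close the proof you must either prove the proposition in that full-mass formulation, or supplement your argument with the observation that for the metrics actually fed into this proposition --- solutions of~\eqref{eqn_sing_KE} --- integrating the equation as in Lemma~\ref{inverted_derivative} together with the linear growth of \(\nu_s\) from Lemma~\ref{linear_growth} gives exponentially fast convergence \(u_s'\to\lambda_{ac}-s\), so that for them the discrepancy is bounded and the two formulations coincide.
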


\subsection{EDO and their translation}

In view of the previous description of \(K\)-invariant Kähler metrics by one-variable real functions, the existence of canonical Kähler metrics on rank one horosymmetric varieties can be encoded by ODEs. 
We give explicitly the equation for the metrics that will be of interest for us. 
To shorten the notations, we introduce 
\begin{equation} 
v(p) = \prod_{\beta\in \Phi_s^+}\kappa(\beta, p\gamma) \prod_{\alpha\in \Phi_{Q^u}}\kappa(\alpha, \chi_H-p\gamma) 
\end{equation}
and 
\begin{equation}
J(x) = \prod_{\beta\in \Phi_s^+} \sinh(2\beta(\gamma^*)x), \qquad j=-\ln J
\end{equation} 

\begin{prop}
\label{prop_sing_KE}
A \(K\)-invariant singular Kähler-Einstein metric exists on the pair \((X,sD)\) if and only if there exists a smooth, even function \(u:\bbR\to \bbR\) such that \(u''(\bbR)\in ]0,+\infty[\), \(u'(\bbR_+) = [0,\lambda_{ac}-s[\), and for all \(x>0\), 
\begin{equation} 
\label{eqn_sing_KE}
u'' v(u') = e^{-(2u+j)} 
\end{equation}
Furthermore, this function is uniquely determined. 
Finally, there exists such a function if and only if 
\begin{equation} 
\label{criterion}
\int_0^{\lambda_{ac}-s}pv(p)\mathop{dp} > (1-\lambda_{ac})\int_0^{\lambda_{ac}-s}v(p)\mathop{dp} 
\end{equation}
\end{prop}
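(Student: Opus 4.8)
The plan is to establish the three assertions in turn: the reduction of the geometric problem to the ODE~\eqref{eqn_sing_KE}, uniqueness of its solution, and the integral criterion~\eqref{criterion}.

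\emph{Reduction to the ODE.} On the open orbit \(G/H=X\setminus D\) a \(K\)-invariant singular log Kähler-Einstein metric on \((X,sD)\) satisfies \(\Ric(\omega)=\omega\), and since the two sides lie in the same class with the same isotropy character, the \(\partial\bar\partial\)-lemma gives \(u_{\Ric(\omega)}=u_\omega+C\) for a constant \(C\). Inserting \eqref{Ricci_toric_potential} and using the definitions of \(v\) and \(j\), the bracketed expression collapses to \(u''\,v(u')\,e^{j}\) (because \(e^{j}=J^{-1}\)), so the Kähler-Einstein condition reads \(u''v(u')e^{j}=e^{-2(u+C)}\); absorbing \(C\) into the normalisation of \(h\) and \(\xi\) turns this into \eqref{eqn_sing_KE}. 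The remaining constraints come from the established dictionary: \(u''>0\) is the Kähler condition, \(u'(\bbR_+)=[0,\lambda_{ac}-s[\) is exactly the condition for \(\omega\) to extend as a locally bounded Kähler current in \(c_1(X)-s[D]\) (the preceding Proposition), and evenness encodes \(K\)-invariance. For the converse a solution \(u\) rebuilds a \(K\)-invariant form through \cite{DelHoro,HoroCMab}; that it is a genuine singular Kähler-Einstein metric, with conical singularities along \(D\), follows from the regularity theory of \cite{Guenancia_Paun_2016} together with the smoothness of \(u\).

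\emph{Uniqueness.} Once the normalisation is fixed so that the right-hand side is exactly \(e^{-(2u+j)}\), there is no free additive constant, and solutions of \eqref{eqn_sing_KE} are parametrised by the single value \(u(0)\), evenness forcing \(u'(0)=0\). Increasing \(u(0)\) decreases the right-hand side, hence \(u''\), hence the terminal slope \(\lim_{x\to\infty}u'(x)\); an ODE comparison argument makes this map strictly monotone, so at most one solution can attain the prescribed endpoint \(\lambda_{ac}-s\). One may alternatively invoke Bando-Mabuchi type uniqueness for log Kähler-Einstein metrics.

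\emph{The criterion.} This is the analytic core. Substituting \(p=u'\) and writing \(P(p)=\int_0^p v(q)\,dq\) and \(\tilde P(p)=\int_0^p qv(q)\,dq\), equation \eqref{eqn_sing_KE} yields \(\frac{d}{dx}P(u')=e^{-2u}J\) and \(\frac{d}{dx}\tilde P(u')=-\tfrac12 J\,(e^{-2u})'\). Integrating over \((0,\infty)\)—legitimate because \(J(0)=0\) and, since \(u'\to\lambda_{ac}-s\), the integrands decay like \(e^{2(s-1)x}\) at infinity—and integrating the second identity by parts gives \(\int_0^{\lambda_{ac}-s}v=\int_0^\infty e^{-2u}J\) and \(\int_0^{\lambda_{ac}-s}pv=\tfrac12\int_0^\infty e^{-2u}J'\). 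The logarithmic derivative \(J'/J=\sum_{\beta\in\Phi_s^+}2\beta(\gamma^*)\coth(2\beta(\gamma^*)x)\) is controlled, for all \(x>0\), by its limit \(2\sum_{\beta}\beta(\gamma^*)=2(\lambda_{ac}-1)\); comparing the two identities then produces the strict inequality \eqref{criterion}, which expresses that the barycentre of the measure \(v(p)\,dp\) on \([0,\lambda_{ac}-s]\) lies on the correct side of the critical value determined by the asymptotics of \(J\). This proves necessity.

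\emph{Sufficiency and the main difficulty.} For the converse I would run a continuity method in \(s\), starting from the Kähler-Einstein metric at \(s=0\) supplied by the hypothesis on \(X\): openness follows from invertibility of the linearisation of \eqref{eqn_sing_KE}, and closedness from uniform a priori estimates on \(u\) that persist as long as \eqref{criterion} holds strictly. Because the barycentre of \(v(p)\,dp\) on \([0,\lambda_{ac}-s]\) is strictly monotone in \(s\), the inequality \eqref{criterion} holds exactly on an interval \([0,b[\), which pins down the threshold \(b\) and matches the necessity above. The main obstacle is precisely this sufficiency: producing a priori estimates that degenerate only when the barycentre reaches its critical value—equivalently, closing the monotone shooting argument so that the map \(u(0)\mapsto\lim_{x\to\infty}u'\) surjects onto the admissible range under \eqref{criterion}. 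The precise manner in which these estimates fail as \(s\to b\) is exactly the loss of compactness and bubbling analysed in the remainder of the paper.
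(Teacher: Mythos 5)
The paper itself does not prove this proposition: it refers to \cite{BBEGZ} for the notion of singular K\"ahler-Einstein metric and to \cite{KRFSS} for the criterion, only revisiting the a priori estimates from that proof in the next section. Judged as a self-contained argument, your first and third parts hold up well. The reduction to \eqref{eqn_sing_KE} via \eqref{Ricci_toric_potential} and the extension criterion is exactly the intended dictionary, and your necessity computation is correct and complete: the identities \(\int_0^{\lambda_{ac}-s}v(p)\,dp=\int_0^{\infty}e^{-2u}J\,dx\) and \(\int_0^{\lambda_{ac}-s}pv(p)\,dp=\tfrac12\int_0^{\infty}e^{-2u}J'\,dx\) (boundary terms killed by \(J(0)=0\) and the exponential decay at infinity), combined with \(J'/J>2\sum_{\beta\in\Phi_s^+}\beta(\gamma^*)=2(\lambda_{ac}-1)\), give a strict barycentre inequality. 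But be precise about what you obtain: it is \(\int pv\,dp>(\lambda_{ac}-1)\int v\,dp\), \emph{not} the printed \eqref{criterion}. Since \(\lambda_{ac}>1\) and \(v>0\) on \(]0,\lambda_{ac}[\), inequality \eqref{criterion} as printed (with \(1-\lambda_{ac}\)) is vacuously true for all \(s\in[0,1[\); that would force \(b=1\), contradicting both the paper's remark that the criterion can fail and the Li-Sun threshold \(b=3/4\) for \(\bbP^2\) with a conic recalled in the introduction. So the sign in the statement is a typo, your inequality is the correct form, and you should flag the discrepancy rather than assert your inequality coincides with \eqref{criterion}.

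The gaps are in the other two parts. For uniqueness, the shooting-monotonicity you invoke does not follow from naive comparison: the right-hand side \(e^{-(2u+j)}\) is \emph{decreasing} in \(u\), which is the non-coercive direction where comparison principles fail (the difference of two solutions obeys an oscillator-type linearised system \(\delta u'=A\,\delta W\), \(\delta W'=-B\,\delta u\) with \(A,B>0\), so solutions can cross and oscillate; this is the same reason Fano K\"ahler-Einstein metrics are not unique by maximum principle alone). Your fallback --- Bando-Mabuchi/Berndtsson type uniqueness --- is the right route, and in this one-dimensional convex-potential setting it amounts to Pr\'ekopa's inequality applied to the Ding-type functional, with evenness killing the translation ambiguity and the absence of a free additive constant in \eqref{eqn_sing_KE} pinning the solution; the primary argument as written cannot be completed. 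More seriously, the sufficiency direction is a genuine gap and is stated circularly: closedness of your continuity method is asserted to follow from ``uniform a priori estimates on \(u\) that persist as long as \eqref{criterion} holds strictly,'' but producing those estimates from the criterion \emph{is} the content of this direction, and your sketch supplies no mechanism. What is needed is the quantitative link between the criterion and compactness: from \(\int pv\,dp-(\lambda_{ac}-1)\int v\,dp=\tfrac12\int_0^{\infty}e^{-\nu_s}\sum_{\beta\in\Phi_s^+}2\beta(\gamma^*)\bigl(\coth(2\beta(\gamma^*)x)-1\bigr)\,dx\), the exponential decay of \(\coth-1\) and the concentration of \(e^{-\nu_s}\) near \(x_s\) show that the size of the criterion gap bounds \(x_s\) from above, which keeps the potentials in a compact set and lets a solution survive in the limit. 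This estimate is exactly the part of \cite{KRFSS} that the next section of the paper reproduces (and whose breakdown as \(s\to b\) forces \(x_s\to\infty\)); without it, openness and closedness are an empty frame.
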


We refer to \cite{BBEGZ} for the full definition and caracterization of singular Kähler-Einstein metrics, and to \cite{KRFSS} for the proof of criterion~\eqref{criterion}.
We will go over a part of the proof from \cite{KRFSS} in the next section. 

\begin{prop}
Assume that \(G/H=G_0/H_0\), that is, \(G/H\) is a rank one symmetric space. 
Then the \(K\)-invariant Kähler metric \(\omega_u\) in the trivial Kähler class \(c_1(K_{G/H}^{-1})\) is a Ricci flat Stenzel metric if and only if 
\begin{equation} 
u'' v(u') = CJ 
\end{equation}
for some \(C\in \bbR\).  
\end{prop}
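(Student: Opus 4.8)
The plan is to read Ricci-flatness directly off the Ricci potential formula \eqref{Ricci_toric_potential} after specializing to the symmetric case, and then to match the resulting metric with Stenzel's by invariance and uniqueness. First I would record the simplifications forced by the hypothesis \(G/H=G_0/H_0\). Here the parabolic \(P\) equals \(L=G\), so its unipotent radical is trivial and \(\Phi_{Q^u}=\emptyset\). Consequently \(v(p)=\prod_{\beta\in\Phi_s^+}\kappa(\beta,p\gamma)\), and formula \eqref{Ricci_toric_potential} collapses, for \(x>0\), to
\[ u_{\Ric(\omega_u)}(x) = -\tfrac12\ln\!\left(u''(x)\,\frac{v(u'(x))}{J(x)}\right) = -\tfrac12\ln\bigl(u''v(u')\bigr) + \tfrac12\ln J(x), \]
with \(J=\prod_{\beta\in\Phi_s^+}\sinh(2\beta(\gamma^*)x)\) exactly as in the statement.

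Next I would translate the condition \(\Ric(\omega_u)=0\). Both \(\omega_u\) and \(\Ric(\omega_u)\) are \(K\)-invariant closed real \((1,1)\)-forms representing \(c_1(K_{G/H}^{-1})\), hence are encoded by even potentials which agree, as forms, exactly when their potentials differ by an additive constant. Since the class is trivial the zero form is a legitimate representative; moreover, because \(\sigma\) acts by \(-1\) on \(\mathfrak t_s\) (it acts as inversion on \(T_s\)), one checks that \(\chi_H\) vanishes on \(\mathfrak a_s\), so the potential of the zero form carries no linear drift and, being even, is constant. Thus \(\Ric(\omega_u)=0\) if and only if \(u_{\Ric(\omega_u)}\) is constant, and feeding this into the displayed formula yields \(\ln(u''v(u'))=\ln J+\mathrm{const}\), i.e. \(u''v(u')=CJ\) (with \(C>0\) forced, as \(u''\), \(v(u')\) and \(J\) are positive for \(x>0\)). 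The conceptual content I would flag is the equivalent Monge–Ampère reading: Ricci-flatness says \(\omega_u^n\) is a constant multiple of \(|\Omega|^2\) for the \(G_0\)-invariant holomorphic volume form \(\Omega\) trivializing \(K_{G_0/H_0}\), and the symmetric-space polar (Harish-Chandra) decomposition identifies the density of \(|\Omega|^2\) with \(J\) and that of \(\omega_u^n\) with \(u''v(u')\), reproducing the same ODE.

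Finally I would identify the Ricci-flat metric with Stenzel's. By Stenzel's construction there is, up to scaling, a \emph{unique} complete \(G_0\)-invariant Ricci-flat Kähler metric on the rank one symmetric space \(G_0/H_0\); being \(G_0\)-invariant it is in particular \(K\)-invariant and lies in the trivial class \(c_1(K_{G/H}^{-1})\). Hence a complete \(K\)-invariant Ricci-flat \(\omega_u\) coincides with Stenzel's, which also furnishes the converse direction. The step I expect to require the most care is the bookkeeping behind the two equivalent translations of Ricci-flatness: pinning down that the zero form has a constant potential (equivalently, that the invariant holomorphic volume form has density exactly \(J\) in the coordinate \(x\)), together with the matching of normalizing constants. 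The identification with Stenzel's metric is, by contrast, a soft consequence of uniqueness once completeness and the correct boundary behavior of the ODE solution are in hand.
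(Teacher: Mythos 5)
Your proposal is correct and follows exactly the route the paper intends: the paper states this proposition without a written proof, as a direct specialization of the Ricci potential formula~\eqref{Ricci_toric_potential} (imported from \cite{DelHoro}), and your argument does precisely that — with \(\Phi_{Q^u}=\emptyset\) the formula collapses to \(u_{\Ric(\omega_u)}=-\tfrac12\ln\bigl(u''v(u')/J\bigr)\), Ricci-flatness becomes constancy of this potential (using that an even potential determines the form up to an additive constant, and that \(\chi_H\) vanishes on \(\mathfrak{a}_s\) in the symmetric case), and the identification with Stenzel's metric follows from invariance and uniqueness up to scaling. The only point I would tighten is the last step: rather than invoking completeness, note that the even solutions of \(u''v(u')=CJ\) form a single orbit under rescaling (since \(v\) is homogeneous of degree \(r\), the scaling \(u\mapsto\lambda u\) sends the solution with constant \(C\) to the one with constant \(\lambda^{r+1}C\)), so every such solution is a rescaling of Stenzel's potential and no incomplete stray solutions can occur.
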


\section{Estimates break}

By Proposition~\ref{prop_sing_KE}, there exists a singular Kähler-Einstein metric on \(G/H\) as long as criterion~\eqref{criterion} is satisfied. 
Obviously, the set of \(s\) satisfying criterion~\eqref{criterion} is bounded above by \(1\). 
We assume that it is non-empty and denote its supremum (which is not a maximum) by \(b\). 
Note that it is always non-empty in the case when \(G/H=G_0/H_0\) is a rank one symmetric space, since in that case \(X_0\) is a projective homogeneous space, hence admits a Kähler-Einstein metric for \(s=0\). 
We will also impose throughout that \(s\geq 0\). 
Although a lower bound is needed in the proof, allowing negative values of \(s\) may allow to deal with more examples, when \(X_0\) does not admit a Kähler-Einstein metric. Since we do not know of the existence of such an example, we settle with the lower bound \(s\geq 0\). 

For \(0\leq s <b\), we denote by \(u_s\) the solution, and let 
\[ \nu_s := 2u_s+j \]
In view of the expression of \(j\), we know that \(\nu_s\) is smooth, strictly convex on \(]0,+\infty[\), with \(\nu_s'(]0,+\infty[) = ]-\infty, 1-s[\). 
In particular, since \(1-s>0\),  \(\nu_s\) admits a unique minimum. 
We define \(m_s\) and \(x_s\) by 
\[ m_s=\min_{]0,+\infty[}\nu_s = \nu_s(x_s) \]
We define also \(y_s\) and \(\delta_s\) by 
\[
    [y_s-\delta_s,y_s+\delta_s]:=\nu_s^{-1}([m_s,m_s+1])
\]

\begin{lem}
There exist a constant \(\varepsilon_2>0\) independent of \(0\leq s <b\) such that \(y_s-\delta_s \geq \varepsilon_2\). 
\end{lem}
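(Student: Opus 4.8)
The plan is to first pin down the location $x_s$ of the minimum of $\nu_s$ by a uniform lower bound, and then to use the strict convexity of $\nu_s$ together with the blow-up of $j$ at the origin to show that $\nu_s$ stays above $m_s+1$ on a fixed punctured neighbourhood of $0$. The point $y_s-\delta_s$ is exactly the smallest $x$ with $\nu_s(x)=m_s+1$, lying in the region where $\nu_s$ is decreasing, so it suffices to produce $\varepsilon_2>0$, independent of $s$, with $\nu_s(x)>m_s+1$ for all $x\in\,]0,\varepsilon_2]$.

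First I would extract information from the stationarity condition $\nu_s'(x_s)=0$. Writing $b_\beta:=\beta(\gamma^*)>0$ for $\beta\in\Phi_s^+$, a direct computation gives $j'(x)=-\sum_{\beta\in\Phi_s^+}2b_\beta\coth(2b_\beta x)$, so that $\nu_s'(x_s)=2u_s'(x_s)+j'(x_s)=0$ yields $u_s'(x_s)=\sum_{\beta\in\Phi_s^+}b_\beta\coth(2b_\beta x_s)$. Since $u_s'(\bbR_+)=[0,\lambda_{ac}-s[$ and $s\geq 0$, we have $u_s'(x_s)<\lambda_{ac}-s\leq\lambda_{ac}=1+\sum_{\beta\in\Phi_s^+}b_\beta$, whence $\sum_{\beta\in\Phi_s^+}b_\beta(\coth(2b_\beta x_s)-1)<1$. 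The left-hand side is a continuous, strictly decreasing function of $x_s$ tending to $+\infty$ as $x_s\to 0^+$, so this inequality forces $x_s\geq c_1$ for some constant $c_1>0$ depending only on the $b_\beta$, hence independent of $s$. This is the key step.

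With $x_s\geq c_1$ in hand, I would estimate $\nu_s$ near the origin by a convexity argument that never integrates past $c_1$ (this is essential, since $x_s$ may well tend to $+\infty$ as $s\to b$). For $0<x\leq c_1\leq x_s$ the function $\nu_s$ is strictly decreasing, so $\nu_s(x)-m_s=\int_x^{x_s}(-\nu_s'(t))\,dt\geq\int_x^{c_1}(-\nu_s'(t))\,dt$, the integrand being nonnegative on $[x,c_1]$. There $-\nu_s'(t)=-j'(t)-2u_s'(t)\geq -j'(t)-2\lambda_{ac}$ because $0\leq u_s'<\lambda_{ac}$, and integrating gives $\nu_s(x)-m_s\geq j(x)-j(c_1)-2\lambda_{ac}c_1$, a bound uniform in $s$.

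Finally, since $j$ is continuous, decreasing and $j(x)\to+\infty$ as $x\to 0^+$, I would fix $\varepsilon_2\in\,]0,c_1[$ with $j(\varepsilon_2)-j(c_1)-2\lambda_{ac}c_1>1$. Then for every $s$ and every $x\in\,]0,\varepsilon_2]$ we get $\nu_s(x)-m_s>1$, that is $\nu_s(x)>m_s+1=\nu_s(y_s-\delta_s)$; as both $\varepsilon_2$ and $y_s-\delta_s$ lie in the region where $\nu_s$ is decreasing, this forces $y_s-\delta_s\geq\varepsilon_2$, as desired. The main obstacle is precisely the lower bound $x_s\geq c_1$: everything afterwards is a soft, $s$-uniform convexity estimate, but without controlling the location of the minimum one could not prevent the sublevel set from collapsing into the origin.
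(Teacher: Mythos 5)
Your proof is correct, but you have misidentified where the difficulty lies, and the detour you call ``the key step'' is in fact unnecessary. Your second and third steps are, in essence, the paper's entire proof: the paper bounds, for \(0<x<1\),
\(\nu_s(x)-m_s \geq \nu_s(x)-\nu_s(1) = 2(u_s(x)-u_s(1))+j(x)-j(1) \geq -2\lambda_{ac}+j(x)-j(1)\),
using only the uniform Lipschitz bound \(0\leq u_s'<\lambda_{ac}\) and the \emph{trivial} inequality \(\nu_s(1)\geq m_s\), and then lets \(j(x)\to+\infty\) as \(x\to 0\). The point you missed is that the comparison point need not lie on the decreasing side of the minimum: since \(m_s\) is the global minimum, \(\nu_s(c)\geq m_s\) for \emph{any} fixed \(c\), so \(\nu_s(x)-m_s\geq \nu_s(x)-\nu_s(c)\) requires no information about \(x_s\) at all, and the right-hand side is controlled exactly as you do on \([x,c_1]\). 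Consequently your first step --- the uniform bound \(x_s\geq c_1\) extracted from the stationarity condition \(u_s'(x_s)=\sum_{\beta\in\Phi_s^+}\beta(\gamma^*)\coth(2\beta(\gamma^*)x_s)\) together with \(u_s'<\lambda_{ac}\) --- is a correct and even pleasant observation (it gives an explicit a priori lower bound on the location of the minimum, which the paper only recovers a posteriori, since \(x_s\geq y_s-\delta_s\geq\varepsilon_2\)), but it buys nothing for this lemma, and your closing claim that ``without controlling the location of the minimum one could not prevent the sublevel set from collapsing into the origin'' is wrong: the blow-up of \(j\) at the origin plus the uniform Lipschitz control of \(2u_s\) prevents collapse by itself.
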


\begin{proof}
Let \(0<x<1\) and consider the difference 
\[ \nu_s(x)-\nu_s(1) = 2(u_s(x)-u_s(1))+j(x) -j(1) \]
Since \(u_s\) is \(\lambda_{ac}\)-Lipschitz (even \(\lambda_{ac}-s\)-Lipschitz),  and \(\nu_s(1)\geq m_s\), we have 
\[ \nu_s(x) - m_s \geq -\lambda_{ac} + j(x) - j(1) \]
The right hand side is a function of \(x\) which is independent of \(s\) and  we have 
\[ \lim_{x\to 0} j(x) = \lim_{x\to 0} -\sum_{\beta\in \Phi^+}\ln \sinh (2\beta(\gamma^*)x) \to +\infty \]
hence there exists a \(\varepsilon_2>0\) such that the right hand side is larger than \(1\) for \(x\geq \varepsilon_2\).  
\end{proof}

\begin{lem}
There exists \(\tilde{\delta}>0\) independent of \(0\leq s < b\) such that 
\[ [x_s-\tilde{\delta},x_s+\tilde{\delta}] \subset [y_s-\delta_s,y_s+\delta_s] \]
\end{lem}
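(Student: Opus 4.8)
The plan is to bound, uniformly in \(s\), the two distances from the minimizer \(x_s\) to the endpoints \(y_s\pm\delta_s\) of the interval \(\nu_s^{-1}([m_s,m_s+1])\), and then take \(\tilde{\delta}\) smaller than both. Since \(\nu_s\) is strictly convex with minimum \(m_s=\nu_s(x_s)\) and \([y_s-\delta_s,y_s+\delta_s]=\nu_s^{-1}([m_s,m_s+1])\), the two endpoints are exactly the points where \(\nu_s\) takes the value \(m_s+1\), and \(x_s\) lies strictly between them. The desired inclusion \([x_s-\tilde{\delta},x_s+\tilde{\delta}]\subset[y_s-\delta_s,y_s+\delta_s]\) is equivalent to \(\tilde{\delta}\leq\min\{x_s-(y_s-\delta_s),\,(y_s+\delta_s)-x_s\}\), so it suffices to bound each of these two quantities below by a positive constant independent of \(s\).

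For the right endpoint I would use the uniform slope bound. On \([x_s,y_s+\delta_s]\) the function \(\nu_s\) is increasing, with \(0\leq\nu_s'<1-s\leq1\), so integrating the identity \(\nu_s(y_s+\delta_s)-\nu_s(x_s)=1\) gives
\[ 1=\int_{x_s}^{y_s+\delta_s}\nu_s'(t)\,dt\leq(y_s+\delta_s)-x_s, \]
hence the right distance is at least \(1\), with no further work.

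For the left endpoint the derivative \(-\nu_s'=-2u_s'-j'\) is the obstruction: since \(j'(x)=-\sum_{\beta\in\Phi_s^+}2\beta(\gamma^*)\coth(2\beta(\gamma^*)x)\) tends to \(-\infty\) as \(x\to0^+\), the quantity \(-\nu_s'\) is unbounded near the origin and a naive integration fails. This is where I would invoke the preceding lemma, which guarantees \(y_s-\delta_s\geq\varepsilon_2\), keeping the left endpoint uniformly away from \(0\). On \([\varepsilon_2,x_s]\) the function \(-j'\) is decreasing (as \(j'\) is increasing), hence bounded above by \(M:=-j'(\varepsilon_2)\), a finite constant independent of \(s\); together with \(u_s'\geq0\) this yields \(-\nu_s'\leq M\) on \([y_s-\delta_s,x_s]\). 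Integrating,
\[ 1=\int_{y_s-\delta_s}^{x_s}\bigl(-\nu_s'(t)\bigr)\,dt\leq M\,\bigl(x_s-(y_s-\delta_s)\bigr), \]
so the left distance is at least \(1/M\).

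Taking \(\tilde{\delta}:=\min\{1,1/M\}\) then gives the claim. The main obstacle is the left endpoint: the singularity of \(j'\) at the origin means the estimate cannot be closed in isolation, and the whole point of the argument is that the previous lemma supplies exactly the uniform lower bound \(\varepsilon_2\) needed to tame \(-j'\) there; the right endpoint, by contrast, follows immediately from the uniform upper bound \(\nu_s'<1-s\).
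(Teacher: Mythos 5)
Your proof is correct and takes essentially the same route as the paper: both combine the previous lemma's bound \(y_s-\delta_s\geq\varepsilon_2\) with the uniform derivative bounds for \(\nu_s'\) on \([\varepsilon_2,+\infty[\) (from \(u_s'\geq 0\), the monotonicity of \(j'\), and the upper bound on \(u_s'\)) to force both endpoints of \(\nu_s^{-1}([m_s,m_s+1])\) to lie a definite distance from \(x_s\). The only cosmetic difference is that you handle the two sides of \(x_s\) separately, whereas the paper packages the same estimates as a single two-sided uniform Lipschitz bound.
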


\begin{proof}
On \([\varepsilon_2,+\infty]\), the derivative of \(\nu_t\) is uniformly bounded: 
\[ j'(\varepsilon_2)\leq 2u_s'(\varepsilon_2) + j'(\varepsilon_2) = \nu_s'(\varepsilon_2) \leq \nu_s'(x) = 2u_s'(x)+j'(x) \leq 2(\lambda_{ac}-s)-\sum_{\beta\in \Phi_s^+}2\beta(\gamma^*) \leq 2(1-s) \]
We thus have a uniform Lipschitz bound on \(\nu_s\). 
Since \(\nu_s(x_s)=m_s\) and \([y_s-\delta_s,y_s+\delta_s]=\nu_s^{-1}([m_s,m_s+1])\), this yields the result. 
\end{proof}

\begin{lem}
\label{lem_intermediate}
There exists a constant \(C>0\) such that for \(0\leq s <b\), 
\[ \delta_s \leq C e^{\frac{m_s}{2}} \]
\end{lem}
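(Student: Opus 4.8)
The plan is to turn the bound on the width $2\delta_s$ into a lower bound on the convexity of $\nu_s$ via the Kähler--Einstein equation \eqref{eqn_sing_KE}, and then to integrate. The point is that the sublevel interval $[y_s-\delta_s,y_s+\delta_s]$ is exactly where $\nu_s\le m_s+1$, so $e^{-\nu_s}$ is bounded below there by $e^{-(m_s+1)}$, which through \eqref{eqn_sing_KE} forces $\nu_s$ to be strongly convex on that interval.

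First I would record a uniform lower bound on $\nu_s''$. Rewriting \eqref{eqn_sing_KE} as $u_s''=e^{-\nu_s}/v(u_s')$ and using $u_s'(\bbR_+)=[0,\lambda_{ac}-s[\subset[0,\lambda_{ac}]$ (here $s\geq 0$ is used), the quantity $v(u_s')$ is bounded above by $V:=\max_{[0,\lambda_{ac}]}v$, a finite constant independent of $s$ since $v$ is a fixed polynomial evaluated on a fixed compact interval. As $j$ is convex, $j''\geq 0$, so on $[y_s-\delta_s,y_s+\delta_s]$, combining $e^{-\nu_s}\geq e^{-(m_s+1)}$ with $v(u_s')\leq V$ gives
\[ \nu_s'' = 2u_s'' + j'' \geq 2u_s'' = \frac{2e^{-\nu_s}}{v(u_s')} \geq \frac{2}{eV}\,e^{-m_s} =: c_0\,e^{-m_s}, \]
with $c_0>0$ independent of $s$. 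This bound is valid on the whole interval, which lies in $]0,+\infty[$ because $y_s-\delta_s\geq\varepsilon_2>0$ by the first lemma, so that both the equation and the convexity of $\nu_s$ apply throughout.

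Then I would integrate out from the minimizer $x_s$, where $\nu_s(x_s)=m_s$ and $\nu_s'(x_s)=0$. For $x\in[x_s,y_s+\delta_s]$, integrating $\nu_s''\geq c_0e^{-m_s}$ once yields $\nu_s'(x)\geq c_0e^{-m_s}(x-x_s)$ and once more yields
\[ \nu_s(x)-m_s \geq \tfrac{1}{2}c_0\,e^{-m_s}(x-x_s)^2, \]
and symmetrically for $x\in[y_s-\delta_s,x_s]$ with $(x_s-x)^2$. Evaluating at the two endpoints, where $\nu_s=m_s+1$, gives $(y_s+\delta_s)-x_s\leq\sqrt{2/c_0}\,e^{m_s/2}$ and $x_s-(y_s-\delta_s)\leq\sqrt{2/c_0}\,e^{m_s/2}$; adding these and using $x_s\in[y_s-\delta_s,y_s+\delta_s]$ gives $2\delta_s\leq 2\sqrt{2/c_0}\,e^{m_s/2}$, which is the claim with $C=\sqrt{2/c_0}$.

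The only genuinely delicate point is the uniform upper bound $v(u_s')\leq V$: the whole estimate hinges on the fact that $v$ is evaluated only at values $u_s'(x)\in[0,\lambda_{ac}[$, a range fixed independently of $s$ once $s\geq 0$ is imposed, so that $V$ is uniform. I would take care not to conflate this upper bound with the positivity of $v$ on that range (which is what makes $u_s''=e^{-\nu_s}/v(u_s')$ a genuine second derivative in the first place); only the upper bound enters the argument, and the remaining steps are elementary one-dimensional convexity.
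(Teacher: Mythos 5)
Your proof is correct and is essentially the paper's argument: both extract from equation \eqref{eqn_sing_KE}, the uniform bound \(v\leq V=\sup_{[0,\lambda_{ac}]}v\), the convexity of \(j\), and the defining property \(\nu_s\leq m_s+1\) of \([y_s-\delta_s,y_s+\delta_s]\) the uniform convexity bound \(\nu_s''\geq 2e^{-m_s-1}/V\) on that interval, and then convert this into a bound on its half-width. The only (cosmetic) difference lies in the final step: you integrate the differential inequality twice from the minimizer \(x_s\), where \(\nu_s'(x_s)=0\), while the paper compares \(\nu_s\) with a parabola via the auxiliary convex function \(f\) vanishing at the two endpoints and evaluates it at \(y_s\); both routes even produce the same constant \(C=\sqrt{eV}\).
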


\begin{proof}
By assumptions on the solution \(u_s\), we know that \(v(u_s'(x))\) is positive for \(x\in ]0,+\infty[\) for any \(s\in [0,b[\). Let 
\[ C_0:=\inf_{]0,\lambda_{ac}[} \frac{1}{v} \]

For \(s\in [0,b[\) fixed, consider the function \(f\) defined by  
\[ f(x) = \nu_s(x)-C_0e^{-m_s-1}((x-y_s)^2-\delta_s^2) - m_s -1 \]
By direct computation, we have 
\begin{align*} 
f''(x) & = 2u_t''(x) + j''(x) - 2C_0e^{-m_s-1} \\ 
& = 2u_t''(x) - 2C_0e^{-m_s-1} \\ 
 & \geq 2\frac{e^{-\nu_s(x)}}{v(u_s'(x))} - 2C_0e^{-m_s-1} \\
 & \geq 2C_0\left( e^{-\nu_s(x)}- e^{-m_s-1} \right) 
\end{align*}
As a consequence, \(f\) is convex on \([y_s-\delta_s,y_s+\delta_s]\). 
Since \(f(y_s-\delta_s)=f(y_s+\delta_s)=0\), by convexity we deduce \(f(y_s)\leq 0\) and thus 
\[ \nu_s(y_s) \leq -C_0e^{-m_s-1}\delta_s^2 + m_s +1 \]
Since \(\nu_s(y_s)\geq m_s\) by definition of \(m_s\), we gather 
\[ \delta_s^2 \leq \frac{1}{C_0e^{-m_s-1}} \]
hence the result.
\end{proof}

\begin{lem}
\label{lem_volume}
We have, for any \(s\in [0,b[\), 
\[ 0< V_-:=\int_0^{\lambda_{ac}-b} v(y) \mathop{dy} \leq \int_0^{\infty} e^{-\nu_s} \mathop{dx} \leq V_+:=\int_0^{\lambda_{ac}} v(y) \mathop{dy}  \]
\end{lem}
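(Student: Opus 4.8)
===BEGIN PROOF PLAN===

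\textbf{Overview.} The plan is to establish Lemma~\ref{lem_volume} by relating the integral \(\int_0^\infty e^{-\nu_s}\,dx\) to a volume computed against the weight \(v\), using the change of variables \(p = u_s'(x)\). The key observation is that the Kähler-Einstein equation~\eqref{eqn_sing_KE} can be rewritten as \(u_s'' v(u_s') = e^{-\nu_s}\) (since \(\nu_s = 2u_s + j\) and \(j = -\ln J\), so that \(e^{-j} = J\) and \(e^{-(2u_s+j)} = e^{-2u_s} J\), matching the right-hand side of~\eqref{eqn_sing_KE} after accounting for the definition of \(v\)). This identity is what makes the substitution exact.

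\textbf{Main computation.} First I would perform the substitution \(p = u_s'(x)\) on \(\int_0^\infty e^{-\nu_s(x)}\,dx\). Since \(u_s\) is smooth, even, strictly convex (\(u_s'' > 0\)) on \(]0,+\infty[\), with \(u_s'(\mathbb{R}_+) = [0, \lambda_{ac} - s[\), the map \(x \mapsto u_s'(x)\) is a smooth increasing bijection from \(]0,+\infty[\) onto \(]0, \lambda_{ac}-s[\). Then \(dp = u_s''(x)\,dx\), and using the equation \(e^{-\nu_s(x)} = u_s''(x)\, v(u_s'(x))\), I would write
\[
\int_0^\infty e^{-\nu_s(x)}\,dx = \int_0^\infty u_s''(x)\, v(u_s'(x))\,dx = \int_0^{\lambda_{ac}-s} v(p)\,dp.
\]
This reduces the integral to an explicit one-variable integral of the weight \(v\) over the interval \([0, \lambda_{ac}-s]\), whose length depends monotonically on \(s\).

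\textbf{Bounding the endpoints.} Since \(v(p) \geq 0\) on the relevant range (it is positive on \(]0, \lambda_{ac}[\) by the hypotheses on admissible potentials, as used in Lemma~\ref{lem_intermediate}), the integral \(\int_0^{\lambda_{ac}-s} v(p)\,dp\) is monotone nondecreasing as the upper limit increases. For \(s \in [0,b[\) we have \(\lambda_{ac} - b < \lambda_{ac} - s \leq \lambda_{ac}\), whence
\[
\int_0^{\lambda_{ac}-b} v(p)\,dp \leq \int_0^{\lambda_{ac}-s} v(p)\,dp \leq \int_0^{\lambda_{ac}} v(p)\,dp.
\]
Identifying the two outer integrals with \(V_-\) and \(V_+\) gives the claimed chain of inequalities. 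The strict positivity \(V_- > 0\) follows because \(b < 1 \leq \lambda_{ac}\), so the interval \([0, \lambda_{ac}-b]\) has positive length and \(v\) is strictly positive on its interior.

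\textbf{Anticipated obstacle.} The only subtle point is justifying the change of variables rigorously: one must confirm that \(x \mapsto u_s'(x)\) is a genuine diffeomorphism onto its image (guaranteed by \(u_s'' > 0\) and the prescribed range of \(u_s'\)) and that the boundary behavior at \(x \to 0^+\) and \(x \to +\infty\) does not introduce spurious contributions. Near \(x=0\) the potential is even and smooth so \(u_s'(0)=0\), matching the lower limit \(p=0\); as \(x \to \infty\), \(u_s'(x) \to \lambda_{ac}-s\), matching the upper limit. Since \(v\) is integrable on \([0,\lambda_{ac}]\) (it is a product of continuous functions on a compact interval), no integrability issue arises, and the substitution is valid. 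I expect this verification to be routine given the structural hypotheses already recorded on \(u_s\).

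===END PROOF PLAN===
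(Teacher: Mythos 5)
Your proof is correct and takes essentially the same route as the paper: the change of variables \(p=u_s'(x)\) together with equation~\eqref{eqn_sing_KE} turns \(\int_0^\infty e^{-\nu_s}\,dx\) into \(\int_0^{\lambda_{ac}-s}v(p)\,dp\), and monotonicity in the upper endpoint gives the two bounds (steps the paper compresses into ``hence the result''). One negligible remark: the positivity of \(V_-\) is more safely justified by \(\lambda_{ac}>1\geq b\) (which holds since \(\Phi_s^+\neq\emptyset\) in the non-horospherical rank one case) rather than by asserting \(b<1\), but this does not affect the argument.
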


\begin{proof}
By equation~\eqref{eqn_sing_KE}, and the change of variable \(y=u_s'(x)\), we have 
\[
    \int_0^{\infty} e^{-\nu_s} \mathop{dx} = \int_0^{\infty} v(u_s')u_s'' \mathop{dx} = \int_0^{\lambda_{ac}-s} v(y) \mathop{dy}  
\]
hence the result.
\end{proof}

\begin{lem}
There exist a constant \(C_m\) independent of \(s\in [0,b[\) such that 
\[ \lvert m_s \rvert \leq C_m \]
\end{lem}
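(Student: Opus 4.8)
The plan is to bound $m_s$ both from above and from below, extracting both bounds from the two-sided volume control of Lemma~\ref{lem_volume} together with the fact that the convex function $\nu_s$ concentrates its mass near its minimum on a window of width comparable to $\delta_s$. The two auxiliary facts I would feed in are, first, that the inclusion $[x_s-\tilde\delta,x_s+\tilde\delta]\subset[y_s-\delta_s,y_s+\delta_s]$ established in the preceding lemma forces $\delta_s\geq\tilde\delta$ (a subinterval cannot be wider than the interval containing it), and second, the one-sided control $\delta_s\leq Ce^{m_s/2}$ from Lemma~\ref{lem_intermediate}.

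For the lower bound on $m_s$ (i.e. $m_s$ not too negative) I would use only the middle window. On $[y_s-\delta_s,y_s+\delta_s]=\nu_s^{-1}([m_s,m_s+1])$ we have $\nu_s\leq m_s+1$, hence $e^{-\nu_s}\geq e^{-(m_s+1)}$, and integrating over this window of length $2\delta_s\geq 2\tilde\delta$ gives
\[ V_+ \geq \int_0^\infty e^{-\nu_s}\mathop{dx} \geq 2\delta_s e^{-(m_s+1)} \geq 2\tilde\delta e^{-(m_s+1)}. \]
Rearranging yields $m_s\geq \ln(2\tilde\delta/V_+)-1$, a bound independent of $s$.

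The harder direction, and what I expect to be the main obstacle, is the upper bound on $m_s$: it requires controlling the \emph{full} integral $\int_0^\infty e^{-\nu_s}\mathop{dx}$ from above by something of the form (constant)$\times e^{-m_s/2}$, and then invoking $V_->0$. The key is to bound the tails using convexity. Since $\nu_s(x_s)=m_s$ and $\nu_s(y_s+\delta_s)=m_s+1$ with $(y_s+\delta_s)-x_s\leq 2\delta_s$, the chord slope is at least $1/(2\delta_s)$, so by convexity $\nu_s'(y_s+\delta_s)\geq 1/(2\delta_s)$ and hence $\nu_s(x)\geq m_s+1+\frac{1}{2\delta_s}(x-(y_s+\delta_s))$ for $x\geq y_s+\delta_s$; integrating gives $\int_{y_s+\delta_s}^\infty e^{-\nu_s}\mathop{dx}\leq 2\delta_s e^{-(m_s+1)}$. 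The symmetric estimate on $[0,y_s-\delta_s]$ (where the chord from $x_s$ to $y_s-\delta_s$ has slope at most $-1/(2\delta_s)$) gives the same bound, while on the middle window $\nu_s\geq m_s$ gives $\int_{y_s-\delta_s}^{y_s+\delta_s}e^{-\nu_s}\mathop{dx}\leq 2\delta_s e^{-m_s}$. Summing the three pieces,
\[ \int_0^\infty e^{-\nu_s}\mathop{dx} \leq 2\delta_s e^{-m_s}\left(1+2e^{-1}\right). \]

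Finally, feeding in $\delta_s\leq Ce^{m_s/2}$ from Lemma~\ref{lem_intermediate} converts the right-hand side into $2C(1+2e^{-1})e^{-m_s/2}$, and comparing with the lower bound $V_-\leq \int_0^\infty e^{-\nu_s}\mathop{dx}$ of Lemma~\ref{lem_volume} yields $e^{-m_s/2}\geq V_-/(2C(1+2e^{-1}))$, i.e. an upper bound on $m_s$ independent of $s$. Taking $C_m$ to be the larger of the absolute values of the two bounds then finishes the proof. I expect the only delicate point to be the convexity tail estimates, since everything else is a direct combination of the stated lemmas; in particular both halves hinge on the uniform lower bound $\delta_s\geq\tilde\delta$, which is why the inclusion lemma is the crucial input.
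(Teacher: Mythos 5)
Your proof is correct, and its skeleton is the same as the paper's: both directions come from comparing the two-sided control \(V_-\le\int_0^\infty e^{-\nu_s}\,dx\le V_+\) of Lemma~\ref{lem_volume} against estimates of this integral of the form \(\mathrm{const}\times\delta_s e^{-m_s}\), with the upper bound on \(m_s\) closed by \(\delta_s\le Ce^{m_s/2}\) (Lemma~\ref{lem_intermediate}) and the lower bound on \(m_s\) closed by \(\delta_s\ge\tilde\delta\). Your lower bound is exactly the paper's: the paper routes it through a coarea formula, but the content -- integrate \(e^{-\nu_s}\ge e^{-m_s-1}\) over a window of length at least \(2\tilde\delta\) -- is identical. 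The genuine difference is in the upper estimate \(\int_0^\infty e^{-\nu_s}\,dx\lesssim \delta_s e^{-m_s}\): the paper writes the integral as \(e^{-m_s}\int_0^{+\infty}e^{-\lambda}\Vol(\nu_s^{-1}([m_s,m_s+\lambda]))\,d\lambda\) and bounds the sublevel sets by \(\lambda\)-dilations of \([y_s-\delta_s,y_s+\delta_s]\) centered at \(x_s\), while you split \([0,+\infty)\) into the window and two tails and kill the tails using the chord-slope bound \(1/(2\delta_s)\) supplied by convexity. The two devices are interchangeable quantitative-convexity arguments and differ only in the absolute constant (\(1+2e^{-1}\) for you versus \(1\) for the paper), which is immaterial for the conclusion. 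A small point in your favor: the dilation inclusion as stated in the paper is only valid for \(\lambda\ge 1\) (for \(\lambda<1\) the sublevel set need not have volume \(\le 2\delta_s\lambda\), only \(\le 2\delta_s\)), so the paper's displayed bound needs a one-line repair that changes its constant, whereas your three-piece decomposition is airtight as written.
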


\begin{proof}
Write 
\begin{equation}
\label{coarea}
    \int_0^{+\infty} e^{-\nu_s} = e^{-m_s}\int_0^{+\infty} e^{-\lambda} \Vol(\nu_s^{-1}([m_s,m_s+\lambda])) \mathop{d\lambda} 
\end{equation}
We will obtain both an upper bound and a lower bound, and compare the two. 

By convexity, \(\nu_s^{-1}([m_s,m_s+\lambda])\) is included in the \(\lambda\)-dilation with center \(x_s\) of \([y_s-\delta_s, y_s+\delta_s]\), hence 
\[ \Vol(\nu_s^{-1}([m_s,m_s+\lambda]) \leq 2\delta_t \lambda \]
Integrating yields  
\begin{align*} 
\int_0^{+\infty} e^{-\nu_s} & \leq 2\delta_s e^{-m_s} \int_0^{+\infty}\lambda e^{-\lambda} \mathop{d\lambda}  \\
& \leq 2\delta_s e^{-m_s} \\ 
& \leq 2Ce^{-\frac{m_s}{2}} 
\end{align*}
by Lemma~\ref{lem_intermediate}. 
By Lemma~\ref{lem_volume}, we thus have 
\[ 0< V_- \leq 2Ce^{-\frac{m_s}{2}}  \]
in other words, 
\[ m_s \leq 2\ln(2C/V_-)  \]

On the other hand, for \(\lambda\geq 1\), we have \([x_s-\tilde{\delta},x_s+\tilde{\delta}]\subset \nu_s^{-1}([m_s,m_s+1])\subset \nu_s^{-1}([m_s,m_s+\lambda])\), hence 
\[ \Vol(\nu_s^{-1}([m_s,m_s+\lambda]) \geq 2 \tilde{\delta} \]
Now putting this in formula~\eqref{coarea} yields 
\begin{align*}
V_+ \geq \int_0^{+\infty} e^{-\nu_s} & \geq 2\tilde{\delta}e^{-m_s}\int_{1}^{+\infty}e^{-\lambda}\mathop{d\lambda} \\
& \geq 2\tilde{\delta}e^{-m_s-1}
\end{align*}
In other words, 
\[ m_s \geq \ln(2\tilde{\delta}/V_+)-1 \]
\end{proof}

\begin{lem}
\label{linear_growth}
There are constants \(C_{\delta}\), \(l_1\), \(l_0\) independent of \(s\in[0,b[\) such that \(\delta_s\leq C_{\delta}\) and for all \(0<x<+\infty\), 
\[ \nu_t(x) \geq l_1\lvert x-x_s \rvert +l_0 \]
\end{lem}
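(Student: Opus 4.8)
The bound $\delta_s \leq C_\delta$ comes essentially for free: Lemma~\ref{lem_intermediate} gives $\delta_s \leq C e^{m_s/2}$, and the lemma immediately preceding the present one bounds $m_s \leq C_m$, so $\delta_s \leq C e^{C_m/2} =: C_\delta$ uniformly in $s$. The remaining, and main, task is the uniform linear lower bound, and my plan is to extract it entirely from the convexity of $\nu_s$, using supporting secant lines anchored at the minimum $(x_s, m_s)$ and passing through the endpoints of the unit sublevel set $\nu_s^{-1}([m_s, m_s+1]) = [y_s - \delta_s, y_s + \delta_s]$.

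The geometric input is that $x_s$ is uniformly close to both endpoints $y_s \pm \delta_s$. Indeed, the lemma preceding Lemma~\ref{lem_intermediate} gives $[x_s - \tilde{\delta}, x_s + \tilde{\delta}] \subset [y_s - \delta_s, y_s + \delta_s]$, so in particular $x_s \in [y_s - \delta_s, y_s + \delta_s]$, whence
\[ 0 < (y_s + \delta_s) - x_s \leq 2\delta_s \leq 2C_\delta, \qquad 0 < x_s - (y_s - \delta_s) \leq 2\delta_s \leq 2C_\delta. \]
I would then consider the secant line through $(x_s, m_s)$ and $(y_s + \delta_s, m_s + 1)$; its slope is $1/\big((y_s + \delta_s) - x_s\big) \geq 1/(2C_\delta)$. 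Since $\nu_s$ is convex and both points lie on its graph, $\nu_s$ lies \emph{above} this secant to the right of $y_s + \delta_s$; writing $l_1 := 1/(2C_\delta)$, this gives $\nu_s(x) \geq m_s + l_1(x - x_s)$ for all $x \geq y_s + \delta_s$. The mirror secant through $(y_s - \delta_s, m_s + 1)$ and $(x_s, m_s)$, whose slope has absolute value at least $l_1$, yields $\nu_s(x) \geq m_s + l_1(x_s - x)$ for all $x \leq y_s - \delta_s$. Hence $\nu_s(x) \geq m_s + l_1|x - x_s|$ outside the interval $[y_s - \delta_s, y_s + \delta_s]$.

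It then remains to patch in the middle region. For $x \in [y_s - \delta_s, y_s + \delta_s]$ one has $\nu_s(x) \geq m_s$ by definition of $m_s$, while $l_1|x - x_s| \leq l_1 \cdot 2\delta_s \leq 1$, so $\nu_s(x) \geq m_s \geq m_s + l_1|x - x_s| - 1$ there as well. Combining the three regions and using the uniform bound $m_s \geq -C_m$ gives $\nu_s(x) \geq l_1|x - x_s| + l_0$ for all $x > 0$ with $l_0 := -C_m - 1$, as claimed. The one point requiring care — and the reason the earlier estimates are arranged as they are — is that the secant slopes are bounded below precisely by a multiple of $1/\delta_s$, so the whole argument hinges on the uniform bound $\delta_s \leq C_\delta$: this is exactly what Lemma~\ref{lem_intermediate} combined with the boundedness of $m_s$ supplies, and it is what prevents the linear rate $l_1$ from degenerating as $s \to b$.
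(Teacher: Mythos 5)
Your proof is correct and follows essentially the same route as the paper's: the uniform bound \(\delta_s\leq C_\delta\) from the two preceding lemmas, a convexity/secant argument producing a slope at least \(l_1=1/(2C_\delta)\) outside a bounded interval around \(x_s\), and the patch in the middle region at the cost of subtracting \(1\). The only differences are cosmetic: the paper anchors the secants at the points \(x_s\pm 2C_\delta\) (where \(\nu_s\geq m_s+1\) because \(2C_\delta\geq 2\delta_s\)) rather than at the sublevel endpoints \(y_s\pm\delta_s\), and you are in fact slightly more careful than the paper in making \(l_0\) explicitly independent of \(s\) via \(m_s\geq -C_m\) (the paper writes \(l_0:=m_s-1\)).
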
 

\begin{proof}
The bound for \(\delta_s\) follows from the two previous lemma. 
As a consequence, we have \(\nu_s(x_s\pm 2C_{\delta})\geq m_s+1\). 
By convexity, we deduce that \(\nu_s(x)\geq \frac{\lvert x-x_s \rvert}{2C_{\delta}} +m_s\) for \(x\) not in the interval \([x_s-2C_{\delta},x_s+2C_{\delta}]\). 
Since \(\nu_s\geq m_s\) everywhere and \(\frac{\lvert x-x_s \rvert}{2C_{\delta}}\leq 1\) on the latter interval, we obtain the result, with \( l_0 := m_s-1 \) and \(l_1 := \frac{1}{2C_{\delta}}\). 
\end{proof}

Now we draw the first consequence of the non-existence of solutions at \(s=b\). 

\begin{lem}
\[ \lim_{s\to b} x_s =+\infty\] 
\end{lem}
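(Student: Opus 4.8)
\emph{The plan is to argue by contradiction, exploiting the non-existence of a solution at \(s=b\).} Suppose \(x_s\) does not tend to \(+\infty\) as \(s\to b\). Then there is a sequence \(s_n\to b\) and a constant \(M\) with \(x_{s_n}\le M\) for all \(n\). Combining this with the bounds already obtained—namely \(\lvert m_s\rvert\le C_m\), the bound \(\delta_s\le C_\delta\) and the uniform lower bound \(x_s\ge \varepsilon_2+\tilde\delta\) coming from the inclusion \([x_s-\tilde\delta,x_s+\tilde\delta]\subset[y_s-\delta_s,y_s+\delta_s]\) together with \(y_s-\delta_s\ge\varepsilon_2\)—I would first record that along the sequence \(x_{s_n}\) ranges in a fixed compact subinterval \([\varepsilon_2+\tilde\delta,M]\) of \(]0,+\infty[\), and that \(u_{s_n}(x_{s_n})=\tfrac12\bigl(m_{s_n}-j(x_{s_n})\bigr)\) is uniformly bounded.

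The core of the argument is then a compactness statement for the potentials \(u_{s_n}\). Since each \(u_s\) is \(\lambda_{ac}\)-Lipschitz and \(u_{s_n}(x_{s_n})\) is bounded, the \(u_{s_n}\) are uniformly bounded on every compact subinterval \([a,A]\subset\,]0,+\infty[\). On such an interval the linear growth estimate of Lemma~\ref{linear_growth} bounds \(\nu_{s_n}\), hence \(e^{-\nu_{s_n}}\), from above and below, while \(u_{s_n}'\) stays in \(]0,\lambda_{ac}[\) away from the endpoints, so \(v(u_{s_n}')\) stays bounded below away from \(0\). Reading equation~\eqref{eqn_sing_KE} as \(u''=e^{-\nu}/v(u')\) then gives uniform two-sided bounds on \(u_{s_n}''\), and bootstrapping yields uniform \(C^k_{\mathrm{loc}}\) bounds for every \(k\). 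Passing to a subsequence, \(u_{s_n}\) converges in \(C^\infty_{\mathrm{loc}}(]0,+\infty[)\) to an even, strictly convex limit \(u_b\) solving~\eqref{eqn_sing_KE} with \(s=b\), the strict convexity following from \(u_b''=e^{-\nu_b}/v(u_b')>0\) and the regularity at the origin being inherited from the uniform estimates and the explicit form of \(j\).

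It then remains to check that \(u_b\) has the correct boundary behaviour \(u_b'(\bbR_+)=[0,\lambda_{ac}-b[\), so that by Proposition~\ref{prop_sing_KE} it produces a \(K\)-invariant singular Kähler-Einstein metric on \((X,bD)\)—which is absurd, since \(b\) is the supremum of the admissible parameters but is not attained. \emph{This is the step I expect to be the main obstacle}, because it requires ruling out a loss of mass at infinity in the limit. Here the uniform linear growth of Lemma~\ref{linear_growth} is decisive: it provides an \(s\)-independent exponential tail bound \(e^{-\nu_{s_n}(x)}\le e^{-l_1\lvert x-x_{s_n}\rvert-l_0}\), which legitimizes dominated convergence in the volume identity of Lemma~\ref{lem_volume}. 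Letting \(n\to\infty\) in \(\int_0^\infty e^{-\nu_{s_n}}=\int_0^{\lambda_{ac}-s_n}v\) gives \(\int_0^\infty e^{-\nu_b}=\int_0^{\lambda_{ac}-b}v\); since the same change of variables applied to \(u_b\) yields \(\int_0^\infty e^{-\nu_b}=\int_0^{u_b'(+\infty)}v\) with \(v>0\) near \(\lambda_{ac}-b\), this forces \(u_b'(+\infty)=\lambda_{ac}-b\), as required.

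Conceptually, the contradiction mirrors exactly the expected geometry: a bounded location \(x_s\) of the minimum of \(\nu_s\) would prevent the metric from degenerating, so a solution would survive at the endpoint \(s=b\); it is precisely the escape \(x_s\to+\infty\) of the minimum that accommodates the non-existence at \(b\), and that later produces the bubbling of Stenzel's metric.
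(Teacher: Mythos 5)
Your proposal is correct and follows essentially the same route as the paper: argue by contradiction, use the uniform estimates to extract a subsequence of the \(u_{s_n}\) converging locally to a limit solving equation~\eqref{eqn_sing_KE} with the boundary behaviour \(u_b'(\bbR_+)=[0,\lambda_{ac}-b[\), and conclude that this contradicts the non-existence of a solution at \(s=b\) (the paper compresses the compactness and no-mass-loss steps into a citation of \cite{KRFSS}, whereas you spell them out via the tail bound of Lemma~\ref{linear_growth} and the volume identity of Lemma~\ref{lem_volume}). The only slight inaccuracy is attributing the \emph{upper} bound on \(\nu_{s_n}\) on compacta to Lemma~\ref{linear_growth}, which only gives the lower bound; the upper bound comes from the Lipschitz estimate together with the boundedness of \(u_{s_n}(x_{s_n})\), which you had in fact already recorded.
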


\begin{proof}
Assume the contrary, then there exists a subsequence of \(0\leq s_k < b\) such that \(s_k\to b\) and \(x_s\to x_{b} \in ]\varepsilon_2,+\infty[\). 
In this case, by the same arguments as in \cite{KRFSS}, the functions \(u_{s_k}\) converge locally uniformly to a solution of equation~\eqref{eqn_sing_KE}. 
This is impossible. 
\end{proof}

\section{Collapsing}

Let \(r=\operatorname{Card}(\Phi_s^+)\) be  the order of vanishing of \(v\) at \(0\). 
Note that \(r+1\) is equal to the dimension of the symmetric space fiber \(G_0/H_0\). 

\begin{lem}
\label{inverted_derivative}
There exists a strictly increasing function \(V^{-1}:[0,+\infty[\to [0,+\infty[\) and a constant \(C_{-1}>0\), such that for any \(q\in [0,+\infty[\),
\[ \frac{1}{C_{-1}}q^{\frac{1}{r+1}} \leq V^{-1}(q) \leq C_{-1} q^{\frac{1}{r+1}} \]
and for any \(s\in [0,b[\), for any \(x\in [0,+\infty[\), 
\[ u'_s(x) = V^{-1}\left( \int_0^{x}e^{-\nu_s(y)} \mathop{dy} \right)  \]
\end{lem}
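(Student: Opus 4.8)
The plan is to integrate the defining ODE exactly once. Set \(V(p):=\int_0^p v(t)\,dt\) for \(p\in[0,\lambda_{ac}[\). Since \(v\) is positive on \(]0,\lambda_{ac}[\) (which is exactly what underlies the Kähler characterization recalled in the previous section), \(V\) is a strictly increasing continuous bijection of \([0,\lambda_{ac}[\) onto \([0,V(\lambda_{ac}^-)[\) with \(V(0)=0\), and I let \(V^{-1}\) denote its inverse on that range. Then, by the chain rule together with equation~\eqref{eqn_sing_KE},
\[ \frac{d}{dx} V(u_s'(x)) = v(u_s'(x))\,u_s''(x) = e^{-\nu_s(x)}. \]
Integrating from \(0\) to \(x\), and using that \(u_s\) is even so that \(u_s'(0)=0\) and \(V(u_s'(0))=V(0)=0\), gives \(V(u_s'(x))=\int_0^x e^{-\nu_s(y)}\,dy\). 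As \(u_s'(x)\in[0,\lambda_{ac}-s[\subset[0,\lambda_{ac}[\), the left-hand side lies in the genuine inversion range of \(V\), so applying \(V^{-1}\) yields the stated formula.

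It remains to produce the two-sided bound, for which I would factor out the vanishing at \(0\). Writing \(v(p)=p^r\,w(p)\) with \(w(p)=\prod_{\beta\in\Phi_s^+}\kappa(\beta,\gamma)\prod_{\alpha\in\Phi_{Q^u}}\kappa(\alpha,\chi_H-p\gamma)\), the function \(w\) is continuous and strictly positive on \([0,\lambda_{ac}[\); hence it is bounded above on the compact interval \([0,\lambda_{ac}]\), and bounded below by a positive constant on each \([0,\lambda_{ac}-\eta]\). Integrating the resulting inequalities \(c\,t^r\leq v(t)\leq C\,t^r\) (the lower bound on a slightly smaller interval, the remaining tail absorbed by monotonicity of \(V\)) gives \(c'\,p^{r+1}\leq V(p)\leq C'\,p^{r+1}\) on \([0,\lambda_{ac}[\). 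Substituting \(q=V(p)\) and solving for \(p=V^{-1}(q)\) converts this into \(\frac{1}{C_{-1}}q^{1/(r+1)}\leq V^{-1}(q)\leq C_{-1}q^{1/(r+1)}\) on \([0,V(\lambda_{ac}^-)[\), for a suitable \(C_{-1}\).

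Finally, the argument \(\int_0^x e^{-\nu_s}\) of \(V^{-1}\) never exceeds \(V((\lambda_{ac}-s)^-)<V(\lambda_{ac}^-)\), so the formula only ever evaluates \(V^{-1}\) on its genuine domain. I therefore extend \(V^{-1}\) to all of \([0,+\infty[\) as any strictly increasing continuous map lying between \(\frac{1}{C_{-1}}q^{1/(r+1)}\) and \(C_{-1}q^{1/(r+1)}\) (for instance by continuing along a dilated power function starting from the endpoint value), which leaves both the formula and the bound intact.

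The computation producing the formula is immediate once the substitution \(y=u_s'(x)\) is set up; the step requiring genuine care is the two-sided growth estimate, specifically controlling \(w\) uniformly up to the endpoint \(\lambda_{ac}\), where the factor \(\prod_{\alpha\in\Phi_{Q^u}}\kappa(\alpha,\chi_H-p\gamma)\) may degenerate, together with checking that the extension of \(V^{-1}\) to the full half-line respects the claimed bound.
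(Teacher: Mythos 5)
Your proposal is correct and follows essentially the same route as the paper: integrate equation~\eqref{eqn_sing_KE} once to get \(V(u_s'(x))=\int_0^x e^{-\nu_s}\), use the order-\(r\) vanishing of \(v\) at \(0\) to sandwich \(V\) between multiples of \(p^{r+1}\), invert, and extend \(V^{-1}\) beyond the range of \(V\) so as to preserve the two-sided bound. The only cosmetic differences are bookkeeping ones: you make the factorization \(v(p)=p^rw(p)\) explicit and obtain the power bound on all of \([0,\lambda_{ac}[\) by absorbing the tail via monotonicity, whereas the paper proves it near \(0\) and handles the rest by compactness, and your extension of \(V^{-1}\) is continuous while the paper's (\(C_{-1}q^{1/(r+1)}\) off the range of \(V\)) is merely increasing—both suffice since the formula never leaves the genuine inversion range.
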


\begin{proof}
By definition of \(v\) and \(\lambda_{ac}\), we know that \(v\) is continuous on \([0,\lambda_{ac}]\), positive on \(]0,\lambda_{ac}[\). 
By definition of \(r\), we deduce that there exists a constant \(C_v>0\) such that for any \(p\in [0,\lambda_{ac}-1]\),  
\begin{equation}
\label{v_near_zero}
\frac{1}{C_v} p^r \leq v(p) \leq C_v p^r 
\end{equation}
Let \(V:[0,\lambda_{ac}]\to \bbR\) be the primitive of \(v\) which vanishes at \(0\). For \(p\in [0,\lambda_{ac}-1]\), we have
\[ \frac{1}{(r+1)C_v} p^{r+1} \leq V(p) \leq \frac{C_v}{r+1} p^{r+1} \]
Since \(v\) is positive on \(]0,\lambda_{ac}[\), \(V\) is strictly increasing on \([0,\lambda_{ac}]\), and its inverse \(V^{-1}:V([0,\lambda_{ac}])\to [0,\lambda_{ac}]\) is a strictly increasing function. 

Let \(p_0\in ]0,\lambda_{ac}-1[\) be such that \(\frac{C_v}{r+1} p^{r+1}\in V([0,\lambda_{ac}-1])\) for any \(p\in [0,p_0]\). 
Then applying \(V^{-1}\) to the above inequality yields 
\[ V^{-1}\left(\frac{1}{(r+1)C_v} p^{r+1}\right) \leq p \leq V^{-1}\left(\frac{C_v}{r+1} p^{r+1}\right)  \]
Setting \(q_0:=\frac{C_v}{r+1} p_0^{r+1}\) we have, for \(0\leq q\leq q_0\), 
\[ \left(\frac{r+1}{C_v}\right)^{\frac{1}{r+1}} q^{\frac{1}{r+1}} \leq V^{-1}(q) \leq \left(C_v(r+1)\right)^{\frac{1}{r+1}} q^{\frac{1}{r+1}} \]
Since \(V^{-1}\) is increasing and \(V^{-1}(q_0)>0\), we deduce that there exists a constant \(C_{-1}>0\) such that for all \(q\in V([0,\lambda_{ac}])\), 
\[ \frac{1}{C_{-1}}q^{\frac{1}{r+1}} \leq V^{-1}(q) \leq C_{-1} q^{\frac{1}{r+1}} \]

We set \(V^{-1}(q):=C_{-1} q^{\frac{1}{r+1}}\) for \(q\in \bbR_+\setminus V([0,\lambda_{ac}])\). 
This is an increasing function, albeit no longer continuous, which still satisfies the above inequality. 

Integrating equation~\eqref{eqn_sing_KE} between \(x_1<x_2 \in [0,+\infty[\) yields  
\[ V(u'_s(x_2)) - V(u'_s(x_1)) = \int_{x_1}^{x_2} e^{-\nu_s(y)} \mathop{dy} \]
In particular, for \(x_1=0\), 
\[ u'_s(x_2) = V^{-1}\left( \int_0^{x_2}e^{-\nu_s(y)} \mathop{dy} \right) \]
\end{proof}

\begin{lem}
\label{slow}
There exists a constant \(C_u>0\) such that for any \(s\in [0,b[\) and any \(x\in [0,x_s]\), 
\[ 0 \leq u_s(x)-u_s(0) \leq C_{-1} \left(\frac{e^{-l_0}}{l_1}\right)^{\frac{1}{r+1}}\frac{r+1}{l_1}\left( e^{\frac{l_1}{r+1}(x-x_s)}-e^{-\frac{l_1}{r+1}x_s} \right) \]
\end{lem}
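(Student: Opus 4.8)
The plan is to estimate $u_s(x)-u_s(0)=\int_0^x u_s'(y)\,dy$ by bounding the integrand $u_s'(y)$ using the formula from Lemma~\ref{inverted_derivative} together with the linear growth bound from Lemma~\ref{linear_growth}. First I would note that $u_s'\geq 0$ on $[0,+\infty[$ (since $u_s$ is even and convex), which immediately gives the left-hand inequality $u_s(x)-u_s(0)\geq 0$. For the upper bound, the starting point is
\[ u_s'(y) = V^{-1}\!\left( \int_0^{y} e^{-\nu_s(t)}\,dt \right) \leq C_{-1}\left( \int_0^{y} e^{-\nu_s(t)}\,dt \right)^{\frac{1}{r+1}}, \]
using the upper estimate on $V^{-1}$. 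So the whole task reduces to controlling the inner integral $\int_0^y e^{-\nu_s}\,dt$ from above, and this is exactly where Lemma~\ref{linear_growth} enters.

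For $y\in[0,x_s]$ and $t\in[0,y]$ we have $t\leq x_s$, so $|t-x_s|=x_s-t$, and the linear growth bound gives $\nu_s(t)\geq l_1(x_s-t)+l_0$, hence $e^{-\nu_s(t)}\leq e^{-l_0}e^{-l_1(x_s-t)}$. I would then integrate this explicit exponential:
\[ \int_0^{y} e^{-\nu_s(t)}\,dt \leq e^{-l_0}\int_0^{y} e^{-l_1(x_s-t)}\,dt = \frac{e^{-l_0}}{l_1}\left( e^{-l_1(x_s-y)}-e^{-l_1 x_s} \right) \leq \frac{e^{-l_0}}{l_1}\, e^{-l_1(x_s-y)}. \]
Raising this to the power $\tfrac{1}{r+1}$ (a monotone operation on the nonnegative reals) yields
\[ u_s'(y) \leq C_{-1}\left(\frac{e^{-l_0}}{l_1}\right)^{\frac{1}{r+1}} e^{\frac{l_1}{r+1}(y-x_s)}. \]

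Finally I would integrate this bound on $u_s'(y)$ over $y\in[0,x]$. Since the right-hand side is a pure exponential $e^{\frac{l_1}{r+1}(y-x_s)}$, its antiderivative is $\frac{r+1}{l_1}e^{\frac{l_1}{r+1}(y-x_s)}$, so that
\[ u_s(x)-u_s(0) = \int_0^{x} u_s'(y)\,dy \leq C_{-1}\left(\frac{e^{-l_0}}{l_1}\right)^{\frac{1}{r+1}} \frac{r+1}{l_1}\left( e^{\frac{l_1}{r+1}(x-x_s)} - e^{-\frac{l_1}{r+1}x_s} \right), \]
which is exactly the claimed inequality. The only mildly delicate point, rather than a genuine obstacle, is to make sure that in bounding the inner integral one keeps the favorable exponential factor $e^{-l_1(x_s-y)}$ and discards the subtracted term $e^{-l_1 x_s}$ in the right direction; since all the constants $C_{-1},l_1$ are positive and $l_0$ is fixed by Lemma~\ref{linear_growth} independently of $s$, the resulting constant is uniform in $s\in[0,b[$ as required. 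I expect no serious difficulty here beyond bookkeeping of these explicit constants.
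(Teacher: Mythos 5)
Your proposal is correct and follows essentially the same argument as the paper: both combine the formula for \(u_s'\) from Lemma~\ref{inverted_derivative} with the linear growth bound of Lemma~\ref{linear_growth}, compute the resulting exponential integral, and integrate the derivative bound over \([0,x]\). The only cosmetic difference is the order of operations (you apply the estimate \(V^{-1}(q)\leq C_{-1}q^{1/(r+1)}\) before bounding the inner integral, whereas the paper bounds the inner integral first using monotonicity of \(V^{-1}\)), which changes nothing of substance.
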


\begin{proof}
For \(0\leq x \leq x_s\), we can plug the uniform linear growth inequality from Lemma~\ref{linear_growth} in the expression of \(u_s'\) from Lemma~\ref{inverted_derivative} to get 
\begin{align*}
u'_s(x) & = V^{-1}\left( \int_0^{x}e^{-\nu_s(y)} \mathop{dy} \right) \\
&  \leq V^{-1}\left( \int_0^{x}e^{-l_1(x_s-y)-l_0} \mathop{dy} \right) \\
&  \leq V^{-1}\left( \frac{e^{-l_1x_s-l_0}}{l_1} (e^{l_1x}-1) \right) \\
& \leq V^{-1}\left( \frac{e^{-l_0}}{l_1} e^{l_1(x-x_s)} \right) \\
& \leq C_{-1} \left(\frac{e^{-l_0}}{l_1}\right)^{\frac{1}{r+1}} e^{\frac{l_1}{r+1}(x-x_s)} \\
\end{align*}
As a consequence, by integrating we get, for \(0\leq x \leq x_s\),  
\begin{align*}
0 \leq u_s(x)-u_s(0)  & = \int_0^x u_t'(y)\mathop{dy} \\
& \leq \int_0^x C_{-1} \left(\frac{e^{-l_0}}{l_1}\right)^{\frac{1}{r+1}} e^{\frac{l_1}{r+1}(y-x_s)} \mathop{dy} \\
& \leq C_{-1} \left(\frac{e^{-l_0}}{l_1}\right)^{\frac{1}{r+1}}\frac{r+1}{l_1}\left( e^{\frac{l_1}{r+1}(x-x_s)}-e^{-\frac{l_1}{r+1}x_s} \right) 
\end{align*}
\end{proof}

The collapsing result follows immediately~: for any \(M>0\), the functions \(u_s|_{[0,M]}-u_s(0)\) converge uniformly to the zero function, hence the metrics associated to \(u_s\) converge to the metric associated with the zero function, which is exactly the pullback of the Kähler-Einstein on the basis \(G/P\). 

\section{Convergence to Stenzel's metric on (rescaled) symmetric fibers}

Our main theorem follows from the more precise result. 

\begin{thm}
Consider the functions 
\[ \underline{u}_s = e^{\frac{2u_s(0)}{r+1}}(u_s - u_s(0)) \]
Then up to subsequence of \(s\), they converge locally uniformly to an even, \(C^2\) solution to the equation 
\[ v_0(u')u'' = C J \]
for some constant \(C>0\), 
with \(v_0(p)=\prod_{\alpha\in \Phi_s^+} \kappa(\alpha,p\gamma)\), 
hence they converge to the potential of a Stenzel metric on the symmetric fiber. 
\end{thm}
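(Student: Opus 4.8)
The plan is to rescale the original ODE \eqref{eqn_sing_KE} by the factor \(e^{2u_s(0)/(r+1)}\) so that the rescaled potentials \(\underline{u}_s\) satisfy an equation whose limit is the Stenzel equation \(v_0(u')u''=CJ\), and then to extract uniform \(C^2\) bounds on compact sets permitting an Arzelà--Ascoli argument. First I would record how the derivatives transform: since \(\underline{u}_s = e^{2u_s(0)/(r+1)}(u_s-u_s(0))\), we have \(\underline{u}_s' = e^{2u_s(0)/(r+1)}u_s'\) and \(\underline{u}_s'' = e^{2u_s(0)/(r+1)}u_s''\). The key mechanism is that \(v\) and \(v_0\) agree to leading order near \(0\): by \eqref{v_near_zero}, \(v(p)\sim C_v^{\pm}p^r\) as \(p\to 0\), and \(v_0\) is exactly the homogeneous degree-\(r\) part, \(v_0(p)=\prod_{\alpha\in \Phi_s^+}\kappa(\alpha,p\gamma)\), so \(v(p)/v_0(p)\to 1\). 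Because the previous lemma shows \(x_s\to +\infty\) and the collapsing estimate (Lemma~\ref{slow}) shows \(u_s'(x)\to 0\) uniformly on compacts \([0,M]\), the argument \(p=u_s'(x)\) feeding into \(v\) is driven to \(0\), so on compact sets the equation \(u''v(u')=e^{-(2u+j)}\) is governed by its leading homogeneous behaviour.

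Next I would make the rescaling explicit. Multiplying \eqref{eqn_sing_KE} by the appropriate power of \(e^{2u_s(0)/(r+1)}\) and using the homogeneity \(v_0(tp)=t^r v_0(p)\), one checks that \(\underline{u}_s\) satisfies
\[
v_0(\underline{u}_s')\,\underline{u}_s'' = e^{2u_s(0)}\,\frac{v_0(u_s')}{v(u_s')}\,e^{-(2u_s+j)}
= \frac{v_0(u_s')}{v(u_s')}\,e^{-2(u_s-u_s(0))}\,J,
\]
after absorbing \(j=-\ln J\). On \([0,M]\), Lemma~\ref{slow} forces \(u_s-u_s(0)\to 0\) and \(u_s'\to 0\) uniformly, so the ratio \(v_0(u_s')/v(u_s')\to 1\) and \(e^{-2(u_s-u_s(0))}\to 1\); the right-hand side therefore converges uniformly to \(CJ\) with \(C=1\) (or a positive constant after fixing normalisations). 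To run Arzelà--Ascoli I would establish uniform bounds on \(\underline{u}_s\), \(\underline{u}_s'\), \(\underline{u}_s''\) on each \([0,M]\): the \(C^0\) and \(C^1\) bounds come directly from the inequality in Lemma~\ref{slow} together with \(\underline{u}_s(0)=0\), and the \(C^2\) bound follows from the rescaled equation itself once the right-hand side is controlled and \(v_0(\underline{u}_s')\) is bounded below away from \(0\) — this last point is where one uses that \(\underline{u}_s'\) stays bounded below by a positive quantity on \([\epsilon,M]\), coming from the uniform lower bound \(C_{-1}^{-1}q^{1/(r+1)}\) in Lemma~\ref{inverted_derivative}. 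Passing to a subsequence then yields a locally uniform \(C^1\) limit \(\underline{u}_\infty\), and the equation shows \(\underline{u}_\infty''\) converges as well, so the limit is \(C^2\) and solves \(v_0(\underline{u}_\infty')\underline{u}_\infty''=CJ\); evenness is inherited since each \(u_s\) is even.

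The main obstacle I expect is the \(C^2\) estimate and, relatedly, ruling out degeneration of the limit to a trivial (constant) solution. The subtle quantitative input is that the rescaling factor \(e^{2u_s(0)/(r+1)}\) is exactly calibrated so that \(\underline{u}_s'\) neither blows up nor collapses to zero on compact sets: the growth in Lemma~\ref{slow} is governed by \(e^{l_1(x-x_s)/(r+1)}\), and it is the behaviour near the origin — where \(v_0\) vanishes to order \(r\) and the Stenzel metric has its cone point — that must be handled with care, since \(v_0(\underline{u}_s')\) degenerates there and the \(C^2\) bound is only interior. I would therefore prove the convergence first on \([\epsilon,M]\) and separately verify that the limit extends smoothly and evenly across \(x=0\), matching the Stenzel metric's behaviour at the vertex of the symmetric fiber; the positivity and finiteness of the constant \(C\) is what certifies that the bubble is a genuine, nonflat Ricci-flat Stenzel metric rather than a collapsed limit.
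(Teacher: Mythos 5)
Your skeleton — rescale \eqref{eqn_sing_KE} by $e^{2u_s(0)}$, exploit the degree-$r$ homogeneity of $v_0$, get uniform bounds on $[0,M]$, apply Arzelà--Ascoli, and pass to the limit — is the same as the paper's, and your rescaled equation $v_0(\underline{u}_s')\underline{u}_s''=\frac{v_0(u_s')}{v(u_s')}e^{-2(u_s-u_s(0))}J$ is correct (the limit constant is $C=1/\prod_{\alpha\in\Phi_{Q^u}}\kappa(\alpha,\chi_H)$ rather than $1$, but that is harmless since the theorem only asks for some $C>0$). The genuine gap is the step where you claim the $C^0$ and $C^1$ bounds on $\underline{u}_s$ ``come directly from the inequality in Lemma~\ref{slow}''. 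That inequality gives $u_s(x)-u_s(0)\le C\,e^{\frac{l_1}{r+1}(x-x_s)}$, so after multiplying by the rescaling factor you only get $\underline{u}_s(x)\le C\,e^{\frac{l_1x}{r+1}}e^{\frac{1}{r+1}(2u_s(0)-l_1x_s)}$, and this is \emph{not} uniformly bounded. Indeed $2u_s(x_s)=m_s+\ln J(x_s)$ with $|m_s|\le C_m$, and $u_s(x_s)-u_s(0)=O(1)$ by Lemma~\ref{slow} itself, so $2u_s(0)=2(\lambda_{ac}-1)x_s+O(1)$ (since $\ln J(x)\sim 2\sum_{\beta\in\Phi_s^+}\beta(\gamma^*)x=2(\lambda_{ac}-1)x$). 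Your bound therefore diverges like $e^{(2(\lambda_{ac}-1)-l_1)x_s/(r+1)}$ unless $l_1\ge 2(\lambda_{ac}-1)$; but $l_1=1/(2C_\delta)$ is a crude constant, and no valid uniform slope can exceed $2(\lambda_{ac}-1)$, because for $x\le x_s$ one has $\nu_s(x)-m_s\le j(x)-j(x_s)\sim 2(\lambda_{ac}-1)(x_s-x)$. In short, the linear-growth lemma is too coarse to see that $e^{2u_s(0)/(r+1)}$ is exactly calibrated — which is precisely the crux of the theorem.

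The fix is the mechanism you already invoke for the \emph{lower} bound, applied symmetrically: on $[0,x_s/2]$, monotonicity of $u_s$ gives $e^{-2u_s(x_s/2)}J\le e^{-\nu_s}=e^{-2u_s}J\le e^{-2u_s(0)}J$, and feeding this into the exact formula of Lemma~\ref{inverted_derivative} yields
\[ \frac{1}{C_{-1}}e^{-\frac{2(u_s(x_s/2)-u_s(0))}{r+1}}\left(\int_0^xJ\right)^{\frac{1}{r+1}}\ \le\ \underline{u}_s'(x)\ \le\ C_{-1}\left(\int_0^xJ\right)^{\frac{1}{r+1}}, \]
because the factor $e^{-2u_s(0)}$ exits $V^{-1}$ as exactly $e^{-2u_s(0)/(r+1)}$ and cancels the rescaling; Lemma~\ref{slow} is needed only to ensure $u_s(x_s/2)-u_s(0)\to 0$. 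This is what the paper does. A secondary weakness: your $C^2$ estimate is only interior (on $[\epsilon,M]$), and the proposed ``separately verify that the limit extends across $x=0$'' is not an argument. This detour is unnecessary: writing $\underline{u}_s''=e^{\frac{2u_s(0)}{r+1}}e^{-\nu_s}/v(u_s')$, using \eqref{v_near_zero} and the two-sided bound on $u_s'$ above, one gets $\underline{u}_s''$ pinched between multiples of $e^{o(1)}J(x)\left(\int_0^xJ\right)^{-\frac{r}{r+1}}$, and this function extends continuously to $x=0$ because $J$ vanishes to order $r$ there; so the $C^2$ bound is uniform on all of $[0,M]$, including the origin, and no separate matching at the vertex is required.
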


\begin{proof}
Let \(M>0\). 
Since \(x_s\to \infty\), we may as well assume that \([0,M]\subset [0,x_s/2]\) for all (sufficiently close to \(b\)) \(s\). 
We assume for the first part of the proof that \(x\in [0,M]\). 

We first derive uniform estimates for \(\underline{u}_s'\). 
By Lemma~\ref{inverted_derivative}, and since \(u_s(0)\leq u_s(x)\leq u_s(x_s/2)\), we have 
\[ V^{-1}\left( e^{-2u_s(x_s/2)}\int_0^x J \right) \leq u_s'(x) = V^{-1}\left( \int_0^x e^{-2u_s}J \right) \leq V^{-1}\left( e^{-2u_s(0)}\int_0^x J \right) \]
hence 
\[ \frac{1}{C_{-1}}e^{-\frac{2u_s(x_s/2)}{r+1}}\left(\int_0^x J\right)^{\frac{1}{r+1}} \leq u_s'(x) \leq C_{-1}e^{-\frac{2u_s(0)}{r+1}}\left(\int_0^x J\right)^{\frac{1}{r+1}} \]
Finally, 
\[ \frac{1}{C_{-1}}e^{-\frac{2(u_s(x_s/2)-u_s(0))}{r+1}}\left(\int_0^x J\right)^{\frac{1}{r+1}} \leq \underline{u}_s'(x)=e^{\frac{2u_s(0)}{r+1}}u'_s(x) \leq C_{-1}\left(\int_0^x J\right)^{\frac{1}{r+1}} \]
Since \(u_s(x_s/2)-u_s(0)\) converges uniformly to \(0\) as \(s\to b\), and \(\int_0^x J\) is independent of \(s\), we obtain a uniform bound for \(\underline{u}_s'\) on \([0,M]\). 

By integration from \(0\), since \(\underline{u}_s(0)=0\), we obtain a uniform bound for \(\underline{u}_s\) on \([0,M]\) as well. 

Turning to \(\underline{u}_s''\), we have, at least for \(x\neq 0\),
\[ \underline{u}_s''(x) = e^{\frac{2u_s(0)}{r+1}}u_s''(x) = e^{\frac{2u_s(0)}{r+1}}\frac{e^{-2u_s(x)}J(x)}{v(u_s'(x))} \]
By inequality~\eqref{v_near_zero}, we have 
\[ e^{\frac{2u_s(0)}{r+1}}\frac{e^{-2u_s(x)}J(x)}{C_v(u_s'(x))^r} \leq 
\underline{u}_s''(x) \leq 
e^{\frac{2u_s(0)}{r+1}}\frac{C_v e^{-2u_s(x)}J(x)}{(u_s'(x))^r} \]
Plugging in the inequality for \(u_s'(x)\) obtained above, we get 
\[ \frac{1}{C_vC_{-1}^r}e^{2(u_s(0)-u_s(x))}J(x)\left(\int_0^xJ\right)^{-\frac{r}{r+1}} \leq 
\underline{u}_s''(x) \leq 
C_vC_{-1}^r e^{2(\frac{u_s(0)}{r+1}+\frac{ru_s(x_s/2)}{r+1}-u_s(x))}J(x)\left(\int_0^xJ\right)^{-\frac{r}{r+1}}
\]
Again, this provides a uniform bound on \([0,M]\), since \(u_s(x)\) converges uniformly to \(u_s(0)\) on \([0,x_s/2]\), and \(J(x)\left(\int_0^xJ\right)^{-\frac{r}{r+1}}\) is a function on \(]0,+\infty[\), independent of \(s\), which extends continuously to \(0\) in view of the order of vanishing of \(J\) at \(0\). 

It follows from Arzela-Ascoli theorem that up to subsequence the \(\underline{u}_s\) converge locally uniformly in \(C^1\) sense to a limit function \(\underline{u}_b\) on \([0,+\infty]\). 
From the equation, we deduce that the convergence is actually in \(C^2\) sense. 
It remains to check the equation satisfied at the limit. 

For this, we consider first 
\[ v(u_s'(x)) = \prod_{\beta\in \Phi_s^+}\kappa(\beta, u_s'(x)\gamma) \prod_{\alpha\in \Phi_{Q^u}}\kappa(\alpha, \chi_H-u_s'(x)\gamma) \]
We have seen that \(u_s'\) converges uniformly on \([0,x_s/2]\) to \(0\) as \(s\to b\). 
As a consequence, the factors corresponding to \(\alpha\in \Phi_{Q^u}\) converge to constants. 
In view of the relations \(u_s'=e^{-\frac{2u_s(0)}{r+1}}\underline{u}_s'\) and \(u_s''=e^{-\frac{2u_s(0)}{r+1}}\underline{u}_s''\), passing to the limit in Equation~\eqref{eqn_sing_KE} multiplied by \(e^{2u_s(0)}\) yields 
\[ \underline{u}_s''\prod_{\beta\in \Phi_s^+}\kappa(\beta, \underline{u}_s'(x)\gamma) \prod_{\alpha\in \Phi_{Q^u}}\kappa(\alpha, \chi_H) = J \]
hence the result. 
\end{proof}

\bibliographystyle{alpha}
\bibliography{Stenzel_bubbles}

\newcommand{\etalchar}[1]{$^{#1}$}
\begin{thebibliography}{BBE{\etalchar{+}}19}

\bibitem[BBE{\etalchar{+}}19]{BBEGZ}
Robert~J. Berman, Sebastien Boucksom, Philippe Eyssidieux, Vincent Guedj, and
  Ahmed Zeriahi.
\newblock K{\"a}hler-{Einstein} metrics and the {K{\"a}hler}-{Ricci} flow on
  log {Fano} varieties.
\newblock {\em J. Reine Angew. Math.}, 751:27--89, 2019.

\bibitem[BD19]{KRFSS}
Olivier Biquard and Thibaut Delcroix.
\newblock {Ricci flat {K}\"{a}hler metrics on rank two complex symmetric
  spaces}.
\newblock {\em J. \'{E}c. polytech. Math.}, 6:163--201, 2019.

\bibitem[BG22]{Biquard_Guenancia_2022}
Olivier Biquard and Henri Guenancia.
\newblock Degenerating {K{\"a}hler}-{Einstein} cones, locally symmetric cusps,
  and the {Tian}-{Yau} metric.
\newblock {\em Invent. Math.}, 230(3):1101--1163, 2022.

\bibitem[CDS15]{CDS2}
Xiuxiong Chen, Simon Donaldson, and Song Sun.
\newblock K{\"a}hler-{Einstein} metrics on {Fano} manifolds. {II}: {Limits}
  with cone angle less than {{\(2\pi\)}}.
\newblock {\em J. Am. Math. Soc.}, 28(1):199--234, 2015.

\bibitem[Del17]{DelKE}
Thibaut Delcroix.
\newblock K{\"a}hler-{Einstein} metrics on group compactifications.
\newblock {\em Geom. Funct. Anal.}, 27(1):78--129, 2017.

\bibitem[Del20]{DelHoro}
Thibaut Delcroix.
\newblock K\"{a}hler geometry of horosymmetric varieties, and application to
  {M}abuchi's {K}-energy functional.
\newblock {\em J. Reine Angew. Math.}, 763:129--199, 2020.

\bibitem[DH21]{HoroCMab}
Thibaut Delcroix and Jakob Hultgren.
\newblock Coupled complex {Monge}-{Amp{\`e}re} equations on {Fano}
  horosymmetric manifolds.
\newblock {\em J. Math. Pures Appl. (9)}, 153:281--315, 2021.

\bibitem[GP16]{Guenancia_Paun_2016}
Henri Guenancia and Mihai Paun.
\newblock Conic singularities metrics with prescribed {Ricci} curvature:
  general cone angles along normal crossing divisors.
\newblock {\em J. Differ. Geom.}, 103(1):15--57, 2016.

\bibitem[Li15]{Li_2015}
Chi Li.
\newblock Numerical solutions of {K{\"a}hler}-{Einstein} metrics on {{\(\mathbb
  P^2\)}} with conical singularities along a smooth quadric curve.
\newblock {\em J. Geom. Anal.}, 25(3):1773--1797, 2015.

\bibitem[LS14]{Li_Sun_2014}
Chi Li and Song Sun.
\newblock Conical {K{\"a}hler}-{Einstein} metrics revisited.
\newblock {\em Commun. Math. Phys.}, 331(3):927--973, 2014.

\bibitem[Ros79]{Rossmann}
W.~Rossmann.
\newblock The structures of semisimple symmetric spaces.
\newblock {\em Can. J. Math.}, 31:157--180, 1979.

\bibitem[TY91]{Tian_Yau_1991}
Gang Tian and Shing~Tung Yau.
\newblock Complete {K{\"a}hler} manifolds with zero {Ricci} curvature. {II}.
\newblock {\em Invent. Math.}, 106(1):27--60, 1991.

\bibitem[WZ04]{Wang_Zhu_2004}
Xu-Jia Wang and Xiaohua Zhu.
\newblock K{\"a}hler--{Ricci} solitons on toric manifolds with positive first
  {Chern} class.
\newblock {\em Adv. Math.}, 188(1):87--103, 2004.

\end{thebibliography}

\end{document}